\newtheorem{theorem}{Theorem}[section]
\newtheorem{lemma}[theorem]{Lemma}
\newtheorem{conj}[theorem]{Conjecture}
\newtheorem{proposition}[theorem]{Proposition}
\theoremstyle{definition}
\newtheorem{remark}[theorem]{Remark}
\numberwithin{equation}{section}
\newcommand{\RR}{\mathbb{R}}
\newcommand{\NN}{\mathbb{N}}
\newcommand{\QQ}{\mathbb{Q}}
\newcommand{\ZZ}{\mathbb{Z}}
\newcommand{\cV}{\mathcal{V}}
\newcommand{\cL}{\mathcal{L}}
\newcommand{\cH}{\mathcal{H}}
\newcommand{\cN}{\mathcal{N}}
\newcommand{\sA}{\mathscr{A}}
\newcommand{\sC}{\mathscr{C}}
\newcommand{\sB}{\mathscr{B}}
\newcommand{\Bad}{\mathbf{Bad}}
\newcommand{\bzero}{\mathbf{0}}
\newcommand{\ba}{\mathbf{a}}
\newcommand{\bc}{\mathbf{c}}
\newcommand{\bx}{\mathbf{x}}
\newcommand{\bz}{\mathbf{z}}
\newcommand{\bv}{\mathbf{v}}
\newcommand{\bp}{\mathbf{p}}
\newcommand{\br}{\mathbf{r}}
\newcommand{\bw}{\mathbf{w}}
\newcommand{\diag}{\mathrm{diag}}
\newcommand{\Vol}{\mathrm{Vol}}
\newcommand{\SL}{\mathrm{SL}}
\newcommand{\GL}{\mathrm{GL}}
\newcommand {\ignore}[1]  {}
\newif\ifdraft\drafttrue
\newcommand{\ggm}{G/\Gamma}
\newcommand{\rr}{{\bf r}}
\renewcommand{\emptyset}{\varnothing}
\renewcommand{\setminus}{\smallsetminus}
\begin{document}

\title{Bounded orbits of certain diagonalizable flows on $SL_{n}(\RR)/SL_{n}(\ZZ)$}

\author{Lifan Guan}
\address{Beijing International Center for Mathematical Research, Peking University, Beijing 100871, China.}
\email{guanlifan@pku.edu.cn}

\author{Weisheng Wu}
\address{School of Mathematical Sciences, Peking University, Beijing 100871, China.}
\email{wuweisheng@math.pku.edu.cn}

\begin{abstract}
We prove that the set of points that have bounded orbits under certain diagonalizable flows is a hyperplane absolute winning subset of $SL_{n}(\RR)/SL_{n}(\ZZ)$.
\end{abstract}

\maketitle

\section{Introduction}
\subsection{Statement of main results}
Let $G$ be a connected Lie group, $\Gamma$ a nonuniform lattice in $G$, and $F=\{g_t: t \in \RR \}$ a one-parameter subgroup of $G$ with noncompact closure. We are interested in the dynamical properties of the action of $F$ on the homogeneous space $\ggm$ by left translations. Specifically, we will focus on the study of
the set
\begin{equation*}
E(F) := \{\Lambda\in G/\Gamma: F\Lambda\text{ is bounded in $\ggm$}\}
\end{equation*}
in this paper. In certain important cases, it turns out that $E(F)$ has zero Haar measure (For example when $G$ is semisimple without compact factors and $\Gamma$ is irreducible, this follows from Moore's ergodicity theorem). If $F$ is Ad-unipotent, $E(F)$ is even smaller. In this case, by Ratner's Theorems, $E(F)$ is contained in a countable union of proper submanifolds, hence has Hausdorff dimension $< \dim G$. When $F$ is Ad-semisimple, the
situation is quite different. Motivated by the work of Dani (cf. \cite{D1}, \cite{D2}), Margulis proposed a conjecture in his 1990's ICM report \cite{Mar}, which was settled in a subsequent work of Kleinbock and Margulis \cite{KM}. In that work, they proved: if the flow $(\ggm, F)$ has the so-called property (Q), then the set $E(F)$
is \emph{thick}, i.e. for any nonempty open subset $V$ of $\ggm$ the set $E(F) \cap V$ is of Hausdorff dimension equal to the dimension of the underlying space $\ggm$. In particular, when $F$ is Ad-semisimple, the flow $(\ggm, F)$ always has property (Q).

Given countably many Ad-semisimple $F_n$, it is natural to ask whether the set of points $\Lambda$ such that all the orbits $F_n\Lambda$ are bounded is still thick. This is natural from both the dynamical point of view and its relation to number theory. This is proved to be true for $G = \SL_2(\RR)$ and $\Gamma=\SL_2(\ZZ)$ in \cite{KW3}, and for $G=\SL_3(\RR)$ and $\Gamma=\SL_3(\ZZ)$ in \cite{AGK}. Note that this set is the intersection $\bigcap_nE(F_n)$. A powerful tool for studying intersection properties of different sets is a type of game introduced by Schmidt in \cite{S}, which is called  Schmidt's  $(\alpha, \beta)$-game. The game can be played on any metric space, and it defines a class of so-called $\alpha$-winning sets ($0<\alpha<1$). When the metric space is a Riemannian manifold, $\alpha$-winning sets are thick and stable with respect to countable intersections. In this paper, we will use a variant of Schmidt's  $(\alpha, \beta)$-game, i.e., the hyperplane absolute game introduced in \cite{BFKRW} and \cite{KW3}. This game has the advantage that it can be naturally defined on a differential manifold without picking a Riemannian metric while the hyperplane absolute winning (abbreviated as HAW) sets also enjoy the thickness and countable intersection properties. See Section $2$ for details. Note that, in both \cite{KW3} and \cite{AGK}, the authors prove their results by showing that $E(F)$ is HAW in the corresponding case. In fact, the following conjecture is proposed in \cite{AGK}.
\begin{conj}\cite[Conjecture 7.1]{AGK}
Let $G$ be a Lie group, $\Gamma$ a lattice in $G$, and $F$ a one-parameter Ad-diagonalizable subgroup of $G$. Then the set $E(F)$ is HAW on $\ggm$.
\end{conj}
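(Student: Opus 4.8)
The plan is to deduce the conjecture from the $\SL_n(\RR)/\SL_n(\ZZ)$ case established in this paper by a chain of structural reductions, and then to run the hyperplane absolute game in the reduced setting; I expect the real difficulty to sit in one quantitative estimate on the geometry of cusps that is transparent for $\SL_n$ but not in general. The reductions go as follows. If $\Gamma$ is cocompact then $\ggm$ is compact and $E(F)=\ggm$ is trivially HAW, and likewise if $\overline F$ is compact; so assume $\Gamma$ nonuniform and $\overline F$ noncompact. First I would reduce to $G$ semisimple: by the structure theory of lattices in Lie groups, $\ggm$ is a fiber bundle over a finite-volume homogeneous space of the semisimple group $G/R$ ($R$ the solvable radical) with compact nilmanifold fibers (using Mostow's cocompactness of lattices in solvable groups), and since the fibers are compact an $F$-orbit upstairs is bounded iff its image downstairs is, so $E(F)$ is the preimage of the analogous set for $G/R$; granting that HAW is inherited under smooth fiber-bundle projections, we may take $G$ semisimple, and, after quotienting by the center and compact factors and passing to a finite cover, $G$ semisimple, center-free, without compact factors. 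If $\Gamma$ is reducible, $\ggm$ is finitely covered by a product $\prod_i G_i/\Gamma_i$ on which $F$ acts coordinatewise, so $E(F)=\prod_i E(F_i)$; as HAW survives finite covers and finite direct products, we reduce to $\Gamma$ irreducible. Finally, writing $g_t=\exp(tX)$, the elliptic part of $X$ in its real Jordan decomposition generates a relatively compact subgroup commuting with the rest and so contributes only an isometric factor to the flow; discarding it, we may assume $X$ is $\RR$-diagonalizable and nonzero (else $\overline F$ would be compact).

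So let $G$ be semisimple, center-free, without compact factors, $\Gamma$ an irreducible nonuniform lattice, and $F=\{\exp(tX)\}$ with $0\neq X$ $\RR$-diagonalizable. Either $\ggm$ has $\RR$-rank one, in which case $F$ is, up to conjugation and rescaling, the geodesic/frame flow and geometric reduction theory (finitely many cusps, horoball neighborhoods) applies; or $\mathrm{rank}_{\RR}G\geq 2$, in which case $\Gamma$ is arithmetic by Margulis's theorem and we fix a $\QQ$-structure with $\Gamma$ commensurable to $G(\ZZ)$ and use algebraic reduction theory (finitely many $\Gamma$-conjugacy classes of minimal $\QQ$-parabolics, Siegel sets). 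In both cases one gets a $\Gamma$-invariant proper function $\Delta\colon\ggm\to[1,\infty)$ with compact sublevel sets — assembled from the relevant ``successive minima'' — such that $g_t\Lambda$ diverges in $\ggm$ exactly when $\Delta(g_t\Lambda)\to\infty$ (a generalized Mahler criterion). Let $\mathfrak g=\bigoplus_\lambda\mathfrak g_\lambda$ be the $\Ad(X)$-eigenspace decomposition and $H^+=\exp(\bigoplus_{\lambda>0}\mathfrak g_\lambda)$ the expanding horospherical subgroup, identified with $\RR^N$ near the identity. The heart of the proof is the quantitative assertion: for each $C$ there are $C'$ and $M$ such that for every $\Lambda_0$ with $\Delta(\Lambda_0)\leq C$ and every ball $B\subseteq H^+$, the set of $u\in B$ with $\Delta(g_Tu\Lambda_0)>C'$ for some $T$ in a fixed dyadic window is contained in a union of at most $M$ neighborhoods — of radius comparable to the smallest expansion factor $e^{-\lambda_{\min}T}$ — of affine hyperplanes in $B$, the hyperplanes being vanishing loci of weights of $\Ad(X)$ on the unipotent radical of the pertinent $\QQ$-parabolic, with $M$ bounded because only finitely many $\Gamma$-rational parabolics lie below a given height. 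Granting this, $E(F)$ is HAW via the recursive strategy familiar from this paper: the player keeps the invariant that after round $k$ the current ball consists of $\Lambda$ with $\Delta(g_t\Lambda)\leq C'$ for all $|t|\leq T_k$, spends each hyperplane move deleting the thin bad set above (a bounded union of hyperplane neighborhoods at exactly the admissible scale), then advances $T_k\to T_{k+1}$; the point produced in the limit lies in $E(F)$.

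The decisive obstacle is proving the quantitative assertion in full generality. For $\SL_n(\RR)/\SL_n(\ZZ)$ the cusp geometry is read off the action of $g_t$ on the exterior powers $\bigwedge^k\RR^n$, the bad sets become explicit linear conditions, and the combinatorics is that of flags; in general I would have to replace this by the weight theory of arbitrary $\QQ$-rational representations of $G$ relative to $X$, prove that the escape-to-cusp locus on each expanding horospherical leaf is cut out, to leading order, by \emph{affine} hyperplanes rather than higher-degree subvarieties, and — hardest of all — show that their number at a fixed scale stays bounded uniformly over the cusp combinatorics of an arbitrary arithmetic $\ggm$ and over the position of $X$ in the closed Weyl chamber, including the walls, where $\mathfrak g_0$ acquires a noncompact semisimple part along which the flow is isometric and boundedness must be shown to depend only on the $H^{\pm}$-directions. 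A second, softer, obstacle, needed to make the semisimple reduction rigorous, is the inheritance of HAW under the fiber-bundle projections above with noncompact base and total space: one must check that ``vertical'' hyperplanes — those whose preimage contains a fiber direction — are harmless for the game upstairs, which is where I would expect to spend care even though no new idea seems needed.
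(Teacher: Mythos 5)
The statement you are addressing is a conjecture (Conjecture 7.1 of \cite{AGK}); the paper does not prove it, and neither does your proposal. Your reduction chain is aimed at deducing the general case from ``the $\SL_n(\RR)/\SL_n(\ZZ)$ case established in this paper,'' but no such case is established here: Theorem \ref{main1} only covers one-parameter diagonalizable flows whose eigenvalue configuration satisfies \eqref{e:tame} (exactly one contracting eigenvalue and at least $n-2$ eigenvalues of maximal modulus), which on the Diophantine side corresponds precisely to weights with $r_1=\cdots=r_{d-1}\ge r_d$ as in \eqref{def-r-special}. For a general diagonal flow on $\SL_{d+1}(\RR)/\SL_{d+1}(\ZZ)$ the HAW (or even winning) property of $E(F)$ is equivalent, via Lemma \ref{Dani-Klein}, to the winning property of $\Bad(\rr)$ for arbitrary weights, which the paper explicitly records as a challenging open problem of Kleinbock. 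So the intended base case of your induction does not exist, and your reductions (compact fibers over the semisimple quotient, finite covers for reducible lattices, discarding the elliptic part), even granted in full, cannot terminate.

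The substantive gap is the one you yourself flag: the ``quantitative assertion'' that the escape-to-cusp locus on an expanding horospherical leaf is covered, at each scale, by a bounded number of affine hyperplane neighborhoods. That assertion is not a technical estimate to be filled in later; it is the entire content of the conjecture. Even in the restricted setting of this paper, establishing it consumes Sections 3--5: one must attach to each pair $(B,P)$ an integer vector $\ba^+(B,P)$ via Minkowski's theorem, a height $H_B(P)$, and an auxiliary lattice vector $\bv^+(B,P)$ satisfying \eqref{def-parallel}, and then prove Lemma \ref{point-hyperplane} (all relevant rational points at a given stage lie on a \emph{single} rational hyperplane) by a delicate case analysis exploiting the special weight structure $\lambda=r_1=\cdots=r_{d-1}\ge\mu$ --- in particular the fact that only one hyperplane and one line, rather than intermediate-dimensional rational subspaces, need to be tracked. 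For general weights, let alone for general arithmetic quotients with arbitrary position of $X$ in the Weyl chamber (including walls, where $\mathfrak g_0$ has a noncompact semisimple part), no analogue of this mechanism is known, and your proposal offers no idea for producing one. As it stands, the proposal is a reduction of an open problem to a harder-looking open problem, not a proof.
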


In this paper, we restrict ourselves to the case
\begin{equation*}
G=SL_{n}(\RR), \quad \Gamma=SL_{n}(\ZZ).
\end{equation*}
Our main theorem is the following, verifying the above conjecture for certain class of $F$.
\begin{theorem}\label{main1}
Let $F$ be a one-parameter subgroup of $G$ satisfying the following property,
\begin{equation}\label{e:tame}
\begin{aligned}
\text{it is diagonalizable and the eigenvalues of\ } &g_1 (\text{\ denoted by\ } \lambda_1,\ldots,\lambda_n) \text{\ satisfy}:\\
\#\{i:|\lambda_i|<1\}=1 \text{ and } \#\{i:|\lambda_i|&=\max_{1\leq j\leq n}|\lambda_j|\}\geq n-2.
\end{aligned}
\end{equation}
 Then the set $E(F)$ is HAW on $\ggm$.
\end{theorem}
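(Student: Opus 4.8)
The plan is to combine Mahler's compactness criterion with a direct analysis of the hyperplane absolute game, the eigenvalue hypothesis \eqref{e:tame} entering only through a covering lemma for those stages of the game where the orbit threatens to run into the cusp.

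\textbf{Reduction.} Since the HAW property is preserved by $C^{1}$ diffeomorphisms of $\ggm$, in particular by left translations, and since $E(gFg^{-1})=gE(F)$ for $g\in G$, I may conjugate $F$ to a diagonal subgroup; after permuting the coordinates, \eqref{e:tame} becomes
\[
g_t=\diag\big(\,\underbrace{e^{at},\dots,e^{at}}_{n-2},\ e^{bt},\ e^{ct}\,\big),\qquad a>0,\ \ 0\le b\le a,\ \ c<0,\ \ (n-2)a+b+c=0 .
\]
By Mahler's criterion, $F\Lambda$ is bounded iff $\inf_{t\in\RR}\lambda_1(g_t\Lambda)>0$, where $\lambda_1(\cdot)$ denotes the length of a shortest nonzero vector; hence $E(F)=\bigcup_{\varepsilon>0}E_\varepsilon$ with $E_\varepsilon=\{\Lambda:\ \|g_tv\|\ge\varepsilon\ \text{for all }t\in\RR,\ v\in\Lambda\setminus\{0\}\}$. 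It therefore suffices to show: for every $\beta\in(0,\tfrac13)$ and every initial ball $B_0$ lying in a chart of $\ggm$, the ball player of the $\beta$-hyperplane-absolute game has a strategy that forces the nested balls $B_0\supseteq B_1\supseteq\cdots$ to shrink to a single point $\Lambda_\infty\in E_{\varepsilon_0}$, for a suitable $\varepsilon_0=\varepsilon_0(B_0,\beta,n)>0$ fixed at the outset.

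\textbf{Scales versus time windows.} Identify the lattices near the intended limit with elements $h$ of $G$ near the identity via $h\mapsto h\Lambda_*$ for a fixed base lattice $\Lambda_*$. A standard estimate on the norm growth of $g_t$ shows that if $h,h'$ lie in a common ball of radius $\rho$ then $\big|\,\|g_thv\|-\|g_th'v\|\,\big|<\varepsilon_0$ for all $|t|\le T(\rho)$, where $T(\rho)\asymp\log(1/\rho)$; thus the $i$-th ball $B_i$ (of radius $\rho_i$) already pins down, up to error $\varepsilon_0$, the shape of the orbit on $[-T(\rho_i),T(\rho_i)]$, and because $\rho_{i+1}\ge\beta\rho_i$ the length of this interval increases by at most a constant depending on $\beta$ between consecutive stages. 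Hence it is enough that, having arranged $\lambda_1(g_t\Lambda)\ge\varepsilon_0$ for all $\Lambda\in B_i$ and all $t\in[-T(\rho_{i-1}),T(\rho_{i-1})]$, the ball player arrange the same on the bounded window $W_i:=\big(T(\rho_{i-1}),T(\rho_i)\big]\cup\big[-T(\rho_i),-T(\rho_{i-1})\big)$: the windows exhaust $\RR$, nesting makes the control permanent, and so $\Lambda_\infty\in E_{\varepsilon_0}$. The finitely many windows on which $|t|$ stays bounded need no work once $\varepsilon_0$ is chosen small.

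\textbf{The covering lemma (the crux).} The key estimate, which is where \eqref{e:tame} is used, is: \emph{there is $N=N(n,\beta)$ such that, for $\varepsilon_0$ small enough, the set of $h\in B_i$ for which $\|g_t(hv)\|<\varepsilon_0$ for some $t\in W_i$ and some $v\in\Lambda_*\setminus\{0\}$ is contained in a union of at most $N$ neighbourhoods of affine hyperplanes of radius $\le\beta\rho_i$.} The boundedness of the number of relevant $v$ comes from the running invariant: if $v$ is $\varepsilon_0$-short at some $t\in W_i$ for some $h\in B_i$, then transporting by $g_s$ with $|s|=O_\beta(1)$ back to the nearest endpoint time $\pm T(\rho_{i-1})$ and from $h$ to the centre $h_i$ of $B_i$ (each costing only a bounded factor, by the choice of $T$) shows that $g_{\pm T(\rho_{i-1})}(h_iv)$ has norm in $[\varepsilon_0,O_\beta(1)\varepsilon_0]$; but the lattice $g_{\pm T(\rho_{i-1})}h_i\Lambda_*$ has first minimum $\ge\varepsilon_0$ and covolume $1$, so lies in a fixed compact part of $\ggm$ and has only $O_{n,\beta}(1)$ vectors that short, and distinct $v$'s give $\varepsilon_0$-separated such vectors. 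That each such $v$ confines $h$ to a neighbourhood of an actual \emph{hyperplane} of width $\le\beta\rho_i$ — rather than of some curved or wider set — is precisely what forces the shape restriction \eqref{e:tame}: because $g_t$ has a single contracting eigendirection and its expanding part has at most two eigenvalues (one governing an $(n-2)$-dimensional homothety block), the region $\bigcup_{t\in W_i}g_{-t}B(0,\varepsilon_0)$ that $hv$ must enter is, at the relevant scale $\rho_i\|v\|$, flat and thin in a single direction, so $\{h\in B_i:hv\in\text{(that region)}\}$ is a slab of width $\le\beta\rho_i$; with three or more distinct expanding rates this region is genuinely curved at that scale and is no longer captured by a bounded-width hyperplane neighbourhood, which is the obstruction to the full Conjecture.

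\textbf{Conclusion and main obstacle.} Granting the covering lemma, at each stage the ball player spends a bounded number of consecutive moves dodging — by the standard mechanism for avoiding finitely many hyperplane neighbourhoods in the hyperplane absolute game, valid for every $\beta<\tfrac13$ — both the at most $N$ intrinsic hyperplanes of the lemma and the hyperplane neighbourhood removed by the opponent, thereby extending the invariant to the new window $W_i$; the counting above, together with non-divergence estimates of Dani--Margulis/Kleinbock--Margulis type, keeps $\varepsilon_0$ uniform over all stages and all plays, so it may indeed be fixed in advance. Iterating over all windows yields $\Lambda_\infty\in E_{\varepsilon_0}\subseteq E(F)$, and since $\beta\in(0,\tfrac13)$ and the initial ball were arbitrary, $E(F)$ is HAW on $\ggm$. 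The one genuinely hard point is the covering lemma — replacing the curved, a priori high-complexity deep-cusp locus by a bounded union of affine hyperplane neighbourhoods of controlled width — and hypothesis \eqref{e:tame} is used nowhere else.
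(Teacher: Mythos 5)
Your plan has two genuine gaps, one structural and one substantive. Structurally, you have the wrong player executing the strategy: to prove $E(F)$ is HAW you must exhibit a strategy for the player who deletes hyperplane neighborhoods (Alice), with the ball player as adversary; your text gives "the ball player" a strategy and has him "dodging \dots the hyperplane neighbourhood removed by the opponent", which at best proves a statement about what Bob can force (a thickness-type statement), not the HAW property. Even after swapping the roles, your covering lemma produces "at most $N$" hyperplane neighborhoods per time window, but in the hyperplane absolute game Alice may delete only one neighborhood, of width at most $\beta\rho_i$, per turn; disposing of several neighborhoods per scale requires either a multi-turn bookkeeping you do not supply or a variant game. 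This is exactly why the paper does not play the plain hyperplane absolute game at all: it plays the hyperplane potential game (Lemma \ref{HPW}), where Alice may remove a countable family $\{E_k(B_{i_n})^{(3R^{-(n+k)}\rho_0)}\}_{k\in\NN}$ with summable widths in a single move.

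The substantive gap is the covering lemma itself, which is asserted rather than proved, and whose heuristic does not survive the scale bookkeeping. With one contracting direction, a vector $v$ that threatens to become $\varepsilon_0$-short in the window ending at $T(\rho_i)$ is forced by the running invariant to satisfy $\|v\|\gtrsim\varepsilon_0 e^{|c|T(\rho_{i-1})}$, and the thinnest linear constraint it imposes on $h$ (smallness of the fastest-expanding coordinate of $hv$) is a slab of width of order $\varepsilon_0 e^{-aT(\rho_{i-1})}/\|v\|\approx e^{-(a+|c|)T(\rho_{i-1})}\approx\rho_{i-1}$: the $\varepsilon_0$'s cancel, so the width is comparable to the previous radius, not $\le\beta\rho_i$, and no choice of $\varepsilon_0$ or of the window calibration $T(\rho)\asymp\log(1/\rho)$ removes the offending $\beta$-dependent factor (tightening one side of the calibration loosens the other). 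Repairing this by acting on each dangerous object earlier, when its locus is genuinely thin relative to the ball, destroys your compactness count of relevant vectors and lands you precisely on the hard counting problem for weighted approximation. Note also that your stated use of \eqref{e:tame} (flatness of the region $\bigcup_t g_{-t}B(0,\varepsilon_0)$) only invokes the single contracting eigenvalue; if the argument worked as sketched it would prove the conjecture for every weight, which is open. In the paper the hypothesis enters elsewhere: after passing to the Diophantine picture on the expanding horospherical group via Lemma \ref{Dani-Klein}, the equal weights $r_1=\cdots=r_{d-1}$ are what make the Minkowski-type constructions of $\ba^+(B,P)$ and $\bv^+(B,P)$ work, and the key point is not that each dangerous set $\Delta_\epsilon(P)$ is thin, but the arithmetic rigidity of Lemma \ref{point-hyperplane} and Proposition \ref{main prop}: \emph{all} dangerous rational points in a given height range near a ball lie on a \emph{single} rational hyperplane, because an explicitly bounded integer (a determinant built from $\ba^+$ and $\bv^+$) must vanish. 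That one-hyperplane-per-scale statement, which your bounded-$N$ covering does not supply and cannot be obtained from the soft short-vector count, is the crux your proposal is missing.
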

 We also prove the following theorem verifying \cite[Conjecture 7.2]{AGK} for $F$ satisfying \eqref{e:tame}.
\begin{theorem}\label{main2}
Let $F$ be a one-parameter subgroup of $G$ satisfying \eqref{e:tame}, and $F^+=\{g_t\in F:t\geq 0\}$. Let $H(F^+)$ denote the expanding horospherical subgroup of $F^+$ which is defined as
\begin{equation}\label{def-horo}
H(F^+)=\left\{ h \in G: \lim_{t \to +\infty} g_t^{-1} h g_t=e \right\}.
\end{equation}
Then for any $\Lambda\in \ggm$, the set
\begin{equation*}
\{h\in H(F^+) : h\Lambda \in E(F^+)\}
\end{equation*}
is HAW on $H(F^+) $.
\end{theorem}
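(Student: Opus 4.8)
The plan is to prove Theorem~\ref{main2} directly, by producing, for each parameter $\beta$ of the hyperplane absolute game on the manifold $H:=H(F^+)$, a winning strategy for Bob whose outcome always lies in $\{h\in H:h\Lambda\in E(F^+)\}$. A short computation with the eigenspaces of $\Ad(g_1)$ shows that under \eqref{e:tame} the group $H$ is a unipotent (hence nilpotent) subgroup of $G$ which is abelian when $n-1$ of the $\lambda_i$ attain the maximal modulus and two-step nilpotent when exactly $n-2$ do; in either case $\exp\colon\mathfrak h\to H$ is a global diffeomorphism, so by the coordinate-invariance of the HAW property (Section~2) we may run the game in these linear coordinates. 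The first step is a Dani-type correspondence: since each $g_t h\Lambda$ is a unimodular lattice, Mahler's compactness criterion shows that $F^+(h\Lambda)$ is bounded in $\ggm$ if and only if there is $\delta>0$ with $\|g_t h v\|\ge\delta$ for all $v\in\Lambda\setminus\{0\}$ and all $t\ge 0$. Writing $\Delta_v(\delta):=\{h\in H:\|g_t h v\|<\delta\text{ for some }t\ge 0\}$, this gives $\{h:h\Lambda\in E(F^+)\}\supseteq H\setminus\bigcup_{v\in\Lambda\setminus\{0\}}\Delta_v(\delta)$ for every $\delta>0$; since any superset of a HAW set is HAW, it suffices to produce, for each $\beta$, some $\delta=\delta(\beta,\Lambda)>0$ and a strategy forcing the game outcome into $H\setminus\bigcup_v\Delta_v(\delta)$.

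For the strategy I would decompose time into blocks. Since $\Ad(g_t)$ expands $\mathfrak h$, fix a window length $s=s(\beta)$ so that the expansion over any interval of length $s$ is of order $\beta^{-1}$, set $t_k=ks$, and arrange that at the start of the $k$-th block Bob holds a ball $B_k\subseteq H$ of radius comparable to $\beta^{k}$. During that block he must steer into a sub-ball avoiding
\[
D_k:=\bigl\{\,h\in B_k:\ \|g_t h v\|<\delta\ \text{for some }t\in[t_{k-1},t_k]\ \text{and some }v\in\Lambda\setminus\{0\}\,\bigr\},
\]
because, by the inclusion above, doing this for every $k$ drives the outcome into $H\setminus\bigcup_v\Delta_v(\delta)\subseteq\{h:h\Lambda\in E(F^+)\}$.

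The heart of the argument, and the only place \eqref{e:tame} is genuinely used, is the geometric claim: there are $\delta_0>0$, an integer $N=N(n)$ and $c\in(0,1)$ (which may be made as small as desired by shrinking $\delta_0$), depending only on $n$, $F$ and $\Lambda$, such that for every $k$ the set $D_k$ (taken with $\delta=\delta_0$) lies in a union of at most $N$ neighbourhoods of affine hyperplanes of $H$, each of width at most $c\cdot\mathrm{rad}(B_k)$, and this family can be read off from the data seen before the $k$-th block. The idea is that if $\|g_t h v\|<\delta$ then the components of $g_t h v$ along the $\Ad(g_1)$-eigendirections of modulus $\ge 1$ are all $<\delta$; since $h=\exp X$ with $X\in\mathfrak h$ nilpotent, $X\mapsto hv$ is polynomial (affine when $H$ is abelian), so for fixed $v$ these conditions cut out a thin neighbourhood of an affine subspace of codimension $n-1$, in particular of a thin hyperplane neighbourhood, while the single eigenvalue of modulus $<1$ controls — via the correspondence and a Minkowski-type bound on how many lattice vectors can be simultaneously short — which $v$ can matter during $[t_{k-1},t_k]$. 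The hypothesis that at least $n-2$ eigenvalues share the maximal modulus forces all but one of these ``fast'' linear conditions to move at a common rate; together with the uniqueness of the contracting direction this makes the relevant vectors at each scale span a sublattice of bounded rank, so the family of dangerous hyperplanes stays bounded in cardinality, uniformly in $k$. Without \eqref{e:tame} this cardinality grows with $k$ and the scheme breaks down.

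Granting the claim, Bob plays within the $k$-th block in $N+1$ consecutive moves: after Alice deletes her hyperplane neighbourhood he shrinks into a sub-ball that stays disjoint from $D_k$ and avoids Alice's slab, which is possible by a standard packing lemma for the hyperplane absolute game (in the complement of boundedly many sufficiently thin affine slabs a ball of radius $r$ still contains a ball of radius $\gtrsim_{N,\beta}r$, since $\dim H\ge 1$), while keeping the radius comparable to $\beta^{k}$. Iterating over all $k$ and invoking the correspondence of the first paragraph shows the outcome lies in $\{h:h\Lambda\in E(F^+)\}$, proving it HAW. The principal obstacle is the geometric claim, and within it the uniformity in the scale $k$ and in the possibly divergent point $\Lambda$: this requires tracking how the successive best-approximation sublattices of $\Lambda$ evolve along $F$, and in the two-step nilpotent case (exactly $n-2$ maximal eigenvalues) additionally controlling the non-abelian geometry of $H$ in exponential coordinates. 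A secondary, essentially bookkeeping, difficulty is matching the geometric scales $\beta^{k}$ of the game to the flow times $t_k$ and passing from the block-wise discrete-time game to the continuous-time boundedness statement.
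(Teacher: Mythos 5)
There is a genuine gap, in fact two. First, your strategy misassigns the players' roles in the hyperplane absolute game: in this game Bob is the adversary who chooses the balls, and Alice, whose only power is to delete thin hyperplane neighbourhoods, is the player who must force the intersection point into the target set. Your plan of ``a winning strategy for Bob'' in which Bob voluntarily ``shrinks into a sub-ball that stays disjoint from $D_k$'' proves nothing about the HAW property; the correct mechanism is that Alice must \emph{cover} the dangerous set $D_k$ by hyperplane neighbourhoods of legal width and delete them, so that whatever Bob does his balls avoid $D_k$. This is not a cosmetic relabelling: it turns your ``standard packing lemma'' step into the real issue, namely whether $D_k$ can be covered by hyperplane slabs that Alice is allowed to remove.

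Second, the covering statement you call the geometric claim --- that at each scale the dangerous set lies in at most $N(n)$ hyperplane neighbourhoods of width $c\cdot\mathrm{rad}(B_k)$ --- is exactly the technical heart, and it is only asserted, with heuristics about eigendirections and ``bounded rank'' that do not by themselves yield a bounded number of hyperplanes. Moreover, in the form you state it the claim is stronger than what the paper establishes: the paper does not get boundedly many slabs of width comparable to the ball, but instead plays the \emph{hyperplane potential game} (Lemma \ref{HPW}), letting Alice remove, at each relevant stage, a countable family $\{E_k(B)^{(3R^{-(n+k)}\rho_0)}:k\in\NN\}$ with geometrically decaying widths; the substance is Proposition \ref{main prop} and Lemma \ref{point-hyperplane}, which show that all dangerous rational points in a fixed height window near a given ball lie on a \emph{single} rational hyperplane, via the attached integer vector $\ba^+(B,P)$, the attached line $\bv^+(B,P)$, and an integrality argument (Cramer's rule and $\det(M_i)\in q_1^{d-1}\ZZ$). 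Nothing in your sketch substitutes for this. Finally, you attempt to treat an arbitrary $\Lambda\in\ggm$ directly, which forces the uniformity over arbitrary unimodular lattices that you yourself flag as the ``principal obstacle''; the paper avoids it by first proving Theorem \ref{main3} for $\Lambda=\Gamma$ and then reducing the general case to it using the Bruhat decomposition, the Borel density theorem, and the local diffeomorphism $\psi$ built from $ug_0=\phi(u)\psi(u)$ (together with the already-known HAW property of $\Bad_d$ in the case $r_1=r_d$). Without either that reduction or a proof of your uniform covering claim, the argument does not go through.
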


\subsection{Connection to number theory}

To begin, let us define a \emph{$d$-weight $\rr$} to be a $d$-tuple $\rr=(r_1,\ldots,r_d)\in \RR^d$ such that each $r_i$ is positive and their sum equals $1$.
Due to work of Dani \cite{D1} and Kleinbock \cite{Kl}, we know that for a $d$-weight $\rr$ there is a close relation between the set of $\rr$-badly approximable vectors (abbreviated as $\Bad(\rr)$)  and bounded orbits of certain flow corresponding to $\rr$ in $\SL_{d+1}(\RR)/\SL_{d+1}(\ZZ)$. We will not present the explicit definition of $\Bad(\rr)$ here. But we remark that they are natural generalizations of the classical badly approximable numbers.   Recently, there is a rapid progress on the study of intersection properties of the sets $\Bad(\rr)$ for different weight $\rr$, for example, see \cite{BPV,An2,An,Ber,NS,GY}. Concerning the winning properties of such sets, Schmidt proved that $\Bad_d$ (abbreviation for $\Bad(\frac{1}{d},\ldots,\frac{1}{d})$) is winning for his game for any $d\in \mathbb{N}$. They are also proved to be HAW in \cite{BFKRW}. Recently, An \cite{An} proved that $\Bad(\rr)$ are winning sets for Schmidt's game for any $2$-weight $\rr$. The HAW property is also established for such sets by Nesharim and Simmons \cite{NS}.
To this end, we want to highlight the following theorem proved in \cite{GY}, since it motivates the results of this paper.
\begin{theorem}(Cf. \cite[Theorem 1.4]{GY})\label{GY}
Let a $d$-weight $\rr=(r_1,\ldots,r_d)$ satisfy
\begin{equation}\label{def-r-special}
\sum_{i=1}^{d} r_i=1 \text{ and } r_1=\ldots=r_{d-1}\ge r_d\ge 0.
\end{equation}
Then $\Bad(\rr)$ is HAW.
\end{theorem}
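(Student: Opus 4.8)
The plan is to derive Theorem~\ref{GY} from Theorem~\ref{main2} by means of the Dani correspondence, once one has checked that the one-parameter subgroup attached to a weight $\rr$ as in \eqref{def-r-special} satisfies hypothesis \eqref{e:tame}. To this end, set $n=d+1$, $G=\SL_{n}(\RR)$, $\Gamma=\SL_{n}(\ZZ)$, $\Lambda_{0}=\Gamma$, and let $F=\{g_{t}\}$ be the one-parameter subgroup with $g_{t}=\diag(e^{-t},e^{r_{1}t},\dots,e^{r_{d}t})$. For $\by=(y_{1},\dots,y_{d})\in\RR^{d}$ let $u_{\by}\in G$ be the identity matrix whose $(i,1)$ entry has been replaced by $y_{i-1}$ for $2\le i\le n$; the elements $u_{\by}$ form an abelian subgroup $U\cong\RR^{d}$ of $H(F^{+})$, and $\by\mapsto u_{\by}$ is an affine isomorphism of $\RR^{d}$ onto $U$ taking affine hyperplanes to affine hyperplanes. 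By the theorems of Dani and Kleinbock (cf. \cite{D1,Kl}), $\by\in\Bad(\rr)$ if and only if the forward orbit $\{g_{t}u_{\by}\Lambda_{0}:t\ge 0\}$ is bounded, i.e. $u_{\by}\Lambda_{0}\in E(F^{+})$. Setting $W:=\{h\in H(F^{+}):h\Lambda_{0}\in E(F^{+})\}$, the map $\by\mapsto u_{\by}$ identifies $\Bad(\rr)$ with $W\cap U$, so it is enough to prove that $W\cap U$ is HAW on $U$.

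Next I would check \eqref{e:tame}. The eigenvalues of $g_{1}$ are $e^{-1}$ and $e^{r_{1}},\dots,e^{r_{d}}$; since each $r_{i}\ge 0$, exactly one has modulus $<1$, and by \eqref{def-r-special} the maximal modulus $e^{r_{1}}$ is attained by $e^{r_{1}}=\dots=e^{r_{d-1}}$, hence by at least $d-1=n-2$ eigenvalues; so $F$ satisfies \eqref{e:tame}. Theorem~\ref{main2}, applied with $\Lambda=\Lambda_{0}$, then gives that $W$ is HAW on $H(F^{+})$. A computation from \eqref{def-horo} shows that $H(F^{+})$ has Lie algebra spanned by the matrix units $E_{ij}$ (a single nonzero entry, equal to $1$, in position $(i,j)$) with $\mu_{i}>\mu_{j}$, where $\mu=(-1,r_{1},\dots,r_{d})$: when $r_{1}=\dots=r_{d}=1/d$ this is exactly the Lie algebra of $U$, so $H(F^{+})=U$ and Theorem~\ref{main2} already yields that $\Bad_{d}$ is HAW, recovering \cite{BFKRW}; in general, however, $H(F^{+})=UZ$ with $U$ normal and $Z\cong\RR^{d-1}$ having Lie algebra spanned by the $E_{i,n}$ for $2\le i\le d$ (present precisely when $r_{d}<r_{1}$). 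In the latter case $U$ is a \emph{proper} subgroup of $H(F^{+})$, and $W$ genuinely depends on the $Z$-coordinate, so it is not a cylinder over $W\cap U$; since slices of HAW sets need not be HAW, a genuine argument is needed to pass from ``$W$ is HAW on $H(F^{+})$'' to ``$W\cap U$ is HAW on $U$.''

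I would carry out this descent by re-examining the proof of Theorem~\ref{main2} and running the strategy it provides on the game restricted to $U$. In that proof Alice wins by deleting, at the stage matched to a scale $t=T$, a neighbourhood of the set of $h\in H(F^{+})$ for which $g_{T}h\Lambda_{0}$ acquires an abnormally short lattice vector; because $g_{1}$ has a single contracting eigenvalue and, by \eqref{e:tame}, its remaining eigenvalues reduce to the single value $e^{r_{d}}$ together with the repeated top value $e^{r_{1}}$, this set is, in exponential coordinates on $H(F^{+})$, a bounded union of neighbourhoods of affine hyperplanes of controlled width, arranged self-similarly so that only boundedly many are active at once. Restricted to $U=\{Z=0\}$, a short vector of $g_{T}u_{\by}\Lambda_{0}$ is forced (for $T$ large) to have first coordinate $p_{0}\ne 0$, and at the matched scale it confines the offending $\by$ to a thin slab perpendicular to one coordinate direction of $\RR^{d}$; this is exactly where $r_{1}=\dots=r_{d-1}$ enters, ensuring that the $d-1$ ``top'' coordinates expand at a common rate so that these slabs do not tilt from scale to scale and the bounded-multiplicity count survives restriction. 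Granting this, Alice's strategy descends to a winning strategy for the hyperplane absolute game for $W\cap U$ on $U$, which is what we want. The technical heart — and the step I expect to absorb most of the work — is precisely the verification that restriction to $U$ preserves both the affine-hyperplane shape of the deleted sets and the self-similar counting; an essentially equivalent alternative is to dispense with $H(F^{+})$ and play the hyperplane absolute game directly on the parameter space $\RR^{d}$, using the Dani correspondence together with the nondivergence estimates underlying Theorem~\ref{main2}, which is the route taken in \cite{GY}.
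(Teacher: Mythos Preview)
The paper does not prove Theorem~\ref{GY}; it is quoted from \cite{GY} as motivation, with no argument given here. So there is no proof in the paper to compare against, and the relevant question is whether your reduction to Theorem~\ref{main2} succeeds. It does not.

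You correctly locate the obstruction yourself: under the Dani--Kleinbock correspondence, $\Bad(\rr)$ is the slice $W\cap U$ of the HAW set $W\subset H(F^{+})$ by the $d$-dimensional subgroup $U$, and when $r_{1}>r_{d}$ this is a proper slice inside a $(2d-1)$-dimensional manifold. HAW is not preserved under slicing, so Theorem~\ref{main2} alone says nothing about $W\cap U$. Your proposed remedy --- ``re-run Alice's strategy on the restricted game'' --- is not a deduction from Theorem~\ref{main2} but an independent argument, and the sketch you give for it is not accurate. On the slice $\bz=0$ the dangerous region attached to a single rational $P=(\bp/q,r/q)$ is the box $\{\|\bx-\bp/q\|_{\infty}<\epsilon q^{-1-\lambda},\ |y-r/q|<\epsilon q^{-1-\mu}\}$, and at a given height there are in general many such boxes, not a single coordinate slab. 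The substance of \cite{GY} (mirrored in this paper's Sections~3--5 for the larger group) is precisely to attach, via Minkowski's theorem, a \emph{non-coordinate} rational hyperplane to each scale and to prove that all the active boxes lie in a thin neighbourhood of it; the hypothesis $r_{1}=\dots=r_{d-1}$ enters through that construction (cf.\ Lemma~\ref{L:BPV} and the height function $H_{B}(P)$), not through any coordinate alignment of the deleted sets. That work does not fall out of restricting the $(2d-1)$-dimensional strategy to $U$, so what remains after your reduction is essentially the full content of \cite{GY} --- which is indeed, as you note in your last sentence, where the theorem is actually proved.
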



\begin{remark}
Whether $\Bad(\rr)$ is winning ($\alpha$-winning or HAW) for general weight $\rr$ is a challenging open problem proposed by Kleinbock \cite{Kl}.
\end{remark}

\subsection{Structure of the paper}
For sake of convenience, from now on we will assume
\begin{equation*}
G=\SL_{d+1}(\RR), \Gamma=\SL_{d+1}(\RR).
\end{equation*}
That is, the number $n$ in the title of the paper is replaced by $d+1$.

The paper is organized as follows. In Section 2 we recall some basics of certain Schmidt games, namely the hyperplane absolute game and the hyperplane potential game. In Section \ref{Dio-char}, we state Theorem \ref{main3} and then convert it to the Diophantine setting using Lemma \ref{Dani-Klein}. Note that Theorem \ref{main3} whose proof forms the most technical part of this paper, can be regarded as a special case of Theorem \ref{main2}.
 In the rest of Section 3, we turn to the study of pairs $(B,P)$, where $B$ is a closed ball in $\RR^{2d-1}$ and $P$ is a rational vector in $\QQ^d$. We manage to attach a rational hyperplane and a rational line in $\RR^d$ to the pair $(B,P)$. Section 5 is the core of this paper, in which Theorem \ref{main3} is proved using the information of the pairs $(B,P)$ and some subdivisions prepared in Section 3 and 4. In the last section, Theorem \ref{main1} and Theorem \ref{main2} are deduced from Theorem \ref{main3}.

\section{Schmidt games}

In this section, we will recall definitions of certain Schmidt games, namely, the hyperplane absolute game and the hyperplane potential game. They are both variants of the $(\alpha,\beta)$-game introduced by Schmidt in \cite{S}. Since we don't make a direct use of the $(\alpha,\beta)$-game in this paper, we omit its definition here and refer the interested reader to \cite{S,S2}. Instead, we list here some nice properties of the $\alpha$-winning sets:
\begin{enumerate}
\item If the game is played on a Riemannian manifold, then any $\alpha$-winning set is thick.
\item The intersection of countably many $\alpha$-winning sets is $\alpha$-winning.
\end{enumerate}

\subsection{Hyperplane absolute game}
The hyperplane absolute game was introduced in \cite{BFKRW}. It is played on an Euclidean space $\RR^{d}$.
Given a hyperplane $L$ and a $\delta>0$, we denote by $L^{(\delta)}$ the $\delta$-neighborhood of $L$, i.e.,
$$L^{(\delta)}:=\{\bx\in\RR^{d}:\mathrm{dist}(\bx,L)<\delta\}.$$ For $\beta\in(0,\frac{1}{3})$, the
\emph{$\beta$-hyperplane absolute game} is defined as follows. Bob starts by choosing a
closed ball $B_0\subset \RR^d$ of radius $\rho_0$. In the $i$-th turn, Bob chooses a closed ball $B_i$ with radius $\rho_i$, and then
Alice chooses a  hyperplane neighborhood $L_i^{(\delta_i)}$ with $\delta_i\le\beta \rho_i$. Then in the $(i+1)$-th turn, Bob chooses a closed ball $B_{i+1}\subset B_i\setminus L_i^{(\delta_i)}$ of radius $\rho_{i+1}
\geq\beta \rho_{i}$. By this process there is a nested sequence of closed balls
$$B_0\supseteq B_1\supseteq B_2\supseteq \ldots.$$
We say that a subset $S\subset \RR^{d}$ is \emph{$\beta$-hyperplane absolute
winning} (\emph{$\beta$-HAW} for short) if no matter how Bob plays, Alice can ensure that
$$\bigcap_{i=0}^\infty B_i\cap S\neq \emptyset.$$ We say $S$ is \emph{hyperplane absolute winning}
(\emph{HAW} for short) if it is $\beta$-HAW for any $\beta\in(0,\frac{1}{3})$.

We have the following lemma collecting the basic properties of $\beta$-HAW subsets and HAW subsets of $\RR^d$ (\cite{BFKRW}, \cite{KW3}, \cite{GY}):
\begin{lemma}\label{haw-property}
\begin{enumerate}
\item A HAW subset is always $\frac{1}{2}$-winning.
\item Given $\beta,\beta'\in(0,\frac{1}{3})$, if $\beta\geq\beta'$, then any $\beta'$-HAW set is $\beta$-HAW.
\item A countable intersection of HAW sets is again HAW.
\item Let $\varphi:\RR^{d}\to \RR^{d}$ be a $C^1$ diffeomorphism. If $S$ is a HAW set, then so is
$\varphi(S)$.
\end{enumerate}
\end{lemma}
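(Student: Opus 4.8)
The plan is to prove all four items by the standard strategy-transfer method for Schmidt-type games, as in \cite{BFKRW,KW3,GY}, treating them in increasing order of difficulty. \emph{Item (2)} is essentially formal: if $\beta\ge\beta'$, then every ball Bob may legally play in the $\beta$-game has radius $\ge\beta\rho_i\ge\beta'\rho_i$, hence is a legal Bob move in the $\beta'$-game, while every neighborhood $L_i^{(\delta_i)}$ with $\delta_i\le\beta'\rho_i$ that a winning $\beta'$-strategy prescribes also satisfies $\delta_i\le\beta\rho_i$; so a winning $\beta'$-HAW strategy, replayed verbatim, wins the $\beta$-HAW game.

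For \emph{item (1)} I would fix $\alpha<\tfrac12$ and $\beta\in(0,1)$ and set $\beta_0:=\min\{1-2\alpha,\,\alpha\beta,\,\tfrac14\}$, so that $\beta_0<\tfrac13$ and $S$ is $\beta_0$-HAW. Alice then simulates a $\beta_0$-HAW game inside Schmidt's $(\alpha,\beta)$-game: whenever Bob plays a ball $B_i$ of radius $\rho_i$, she feeds it to her winning $\beta_0$-HAW strategy to obtain $L_i^{(\delta_i)}$ with $\delta_i\le\beta_0\rho_i\le(1-2\alpha)\rho_i$, and an elementary computation shows that $B_i\setminus L_i^{(\delta_i)}$ then contains a ball $A_i$ of radius exactly $\alpha\rho_i$ (place its center at distance $\delta_i+\alpha\rho_i$ from $L_i$ on the far side), which she plays as her Schmidt move. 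Bob's following ball has radius $\alpha\beta\rho_i\ge\beta_0\rho_i$ and lies in $B_i\setminus L_i^{(\delta_i)}$, so $(B_i)_i$ is a legal $\beta_0$-HAW play in which Alice followed a winning strategy; hence $\bigcap_iB_i\subseteq S$, so $S$ is $\alpha$-winning for every $\alpha<\tfrac12$, yielding the $\tfrac12$-winning property of (1) (the endpoint being handled as in \cite{BFKRW}).

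\emph{Item (3)} I would prove by interleaving. Given HAW sets $S_1,S_2,\dots$ and $\beta\in(0,\tfrac13)$, fix a schedule $k:\NN\to\NN$ in which each value recurs with gaps at most $G_k:=2^k$, and put $\beta_k:=\beta^{G_k}\in(0,\tfrac13)$, so that each $S_k$ is $\beta_k$-HAW. Alice runs, in parallel, a $\beta_k$-HAW game for each $S_k$: at a turn $i$ with $k(i)=k$ she plays the move her winning $\beta_k$-strategy dictates on the subsequence of balls seen at the previous $S_k$-turns, and at all other turns she plays a trivial move. Since consecutive $S_k$-turns $i'<i$ satisfy $i-i'\le G_k$, one has $\rho_i\ge\beta^{\,i-i'}\rho_{i'}\ge\beta_k\rho_{i'}$, so Bob's balls form a legal $\beta_k$-HAW play against Alice's $S_k$-strategy; hence $\bigcap_iB_i\subseteq S_k$ for every $k$, and therefore $\bigcap_iB_i\subseteq\bigcap_kS_k$. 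Only the existence of such a schedule, i.e.\ countability of the family, is used.

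\emph{Item (4)} is where I expect the main obstacle. Fixing $\beta\in(0,\tfrac13)$, Alice must win the $\beta$-HAW game for $\varphi(S)$. Since Bob's opening ball $B_0$ confines the whole play, I would first restrict attention to the compact sets $\overline{B_0}$ and $\varphi^{-1}(\overline{B_0})$, on which $\|D\varphi\|$ and $\|D\varphi^{-1}\|$ are bounded (by $C'$ and $C$, say) and $D\varphi$ is uniformly continuous. Alice plays trivial moves until the current ball is so small that the nonlinear distortion of $\varphi$ on it is negligible; if Bob never drives the radii to $0$, then $\bigcap_iB_i$ has nonempty interior and meets the dense set $\varphi(S)$ (dense since $S$ is $\tfrac12$-winning by (1)), and Alice has won. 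Otherwise, from a small scale on she simulates a $\beta'$-HAW game for $S$ in the domain with $\beta'$ small relative to $\beta/(CC')$: at each step she inscribes a ball $\tilde B_i\subseteq\varphi^{-1}(B_i)$, feeds it to her winning $\beta'$-strategy to get $\tilde L_i^{(\tilde\delta_i)}$, and plays in the range a genuine hyperplane neighborhood $L_i^{(\delta_i)}$ — built from the affine approximation of $\varphi(\tilde L_i)$ at the relevant point — that contains $\varphi(\tilde L_i^{(\tilde\delta_i)})\cap B_i$ and has $\delta_i\le\beta\rho_i$; Bob's response pulls back to a legal $\beta'$-HAW move $\tilde B_{i+1}\subseteq\varphi^{-1}(B_i)\setminus\tilde L_i^{(\tilde\delta_i)}$, and the common limit point $p$ satisfies $\varphi^{-1}(p)=\bigcap_i\tilde B_i\in S$, so $p\in\varphi(S)$. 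The delicate point, which I would isolate in one geometric lemma, is the construction of $L_i^{(\delta_i)}$: since $\varphi$ is only $C^1$ there is no quadratic Taylor remainder, so the image of a thin hyperplane neighborhood has to be squeezed into a comparably thin hyperplane neighborhood using only the modulus of continuity of $D\varphi$, and this is what forces both the ``wait until the scale is small'' device and the careful matching of $\beta$, $\beta'$ and $C,C'$.
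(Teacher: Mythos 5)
The paper itself does not prove this lemma: it is quoted from \cite{BFKRW}, \cite{KW3}, \cite{GY}, so your write-up is in effect re-proving results of those papers, and it should be judged on its own correctness. Item (2) is correct and is the standard formal argument. Item (1): your one-step simulation is correct and yields $\alpha$-winning for every $\alpha<\frac{1}{2}$, but the lemma asserts $\frac{1}{2}$-winning, and the inference ``winning for all $\alpha<\frac{1}{2}$, yielding the $\frac{1}{2}$-winning property'' is not valid — monotonicity of Schmidt winning goes in the opposite direction (larger $\alpha$ is the stronger property), so the endpoint does not follow from the sub-endpoint cases and needs its own argument (e.g.\ a two-step device: at $\alpha=\frac{1}{2}$, when the hyperplane to be avoided passes near the center of $B_i$, Alice plays her ball internally tangent to it; then the hyperplane cannot cross the interior of Bob's next ball, and one more move gives clearance comparable to the radius). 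Deferring the endpoint to \cite{BFKRW} is citing, not proving. Item (3): the interleaving is the right idea and the legality bookkeeping ($\rho_i\ge\beta^{\,i-i'}\rho_{i'}\ge\beta_k\rho_{i'}$) is fine, but the winning condition of the game as defined is $\bigcap_iB_i\cap S_k\ne\emptyset$, not $\bigcap_iB_i\subseteq S_k$; your conclusion silently assumes $\rho_i\to0$. Bob may keep the radii bounded below, and then $\bigcap_iB_i$ has nonempty interior and meeting each $S_k$ separately is not enough; you need the standard extra step (e.g.\ reduce to the game with $\rho_i\to0$, or note that $\bigcap_kS_k$ is dense by (1) together with Schmidt's countable intersection theorem for $\alpha$-winning sets).

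The genuine gap is in item (4), and it is not the point you flag. The slab-image problem is the easier half: since the HAW response only needs $\delta_i\le\beta\rho_i$, the additive error $o(\rho_i)$ coming from the modulus of continuity of $D\varphi$ is absorbed once you ``wait until the scale is small'' and take $\beta'$ suitably smaller than $\beta$. What breaks is the legality of the simulated domain play. A legal Bob move in the $\beta'$-game requires $\tilde B_{i+1}\subseteq\tilde B_i\setminus\tilde L_i^{(\tilde\delta_i)}$, whereas your construction only gives $\tilde B_{i+1}\subseteq\varphi^{-1}(B_{i+1})\subseteq\varphi^{-1}(B_i)$. The set $\varphi^{-1}(B_i)$ is comparable to an ellipsoid whose axes differ by a factor up to $CC'$, while $\tilde B_i$ is an inscribed ball of radius about $\rho_i/C'$; Bob can place $B_{i+1}$ near the boundary of $B_i$ so that $\varphi^{-1}(B_{i+1})$ — hence any ball you inscribe in it — is entirely outside $\tilde B_i$. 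Then the fed sequence is not a play of the $\beta'$-game at all, the strategy-transfer gives nothing, and moreover $\bigcap_i\tilde B_i$ need not equal $\varphi^{-1}(p)$ (the inscribed balls may even fail to contain $\varphi^{-1}$ of the limit point). Neither inscribing nor circumscribing fixes this naively (circumscribed pullback balls are nested but then contain points Bob never committed to avoiding), and overcoming exactly this mismatch is the substance of the $C^1$-invariance proofs in \cite{BFKRW} and of the more robust potential-game route of \cite{FSU} (Lemma 2.3 of the paper). As written, item (4) — the one property the paper's manifold-level arguments rely on — is not established.
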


The notion of HAW was extended to subsets of $C^1$ manifolds in \cite{KW3}. This is done in two steps. First, one defines the hyperplane absolute game on an open subset $W\subset \RR^d$. It is defined just as the hyperplane absolute game on $\RR^d$, except for requiring that Bob's first move $B_0$ be contained in $W$. Now, let $M$ be a $d$-dimensional $C^1$ manifold, and let $\{(U_{\alpha},\phi_{\alpha})\}$ be a $C^1$ atlas on $M$. A subset $S\subset M$ is said to be HAW on $M$ if for each $\alpha$, $\phi_{\alpha}(S\cap U_{\alpha})$ is HAW on $\phi_{\alpha}(U_{\alpha})$. The definition is independent of the choice of atlas by the property (4) listed above. We have the following lemma that collects the basic properties of HAW subsets of a $C^1$ manifold (cf. \cite{KW3}).

\begin{lemma}\label{haw-manifold}

\begin{enumerate}
\item HAW subsets of a $C^1$ manifold are thick.
\item A countable intersection of HAW subsets of a $C^1$ manifold is again HAW.
\item Let $\phi:M\rightarrow N$ be a diffeomorphism between $C^1$ manifolds, and let $S\subset M$ be a HAW subset of $M$. Then $\phi(S)$ is a HAW subset of $N$.
\item Let $M$ be a $C^1$ manifold with an open cover $\{U_{\alpha}\}$. Then, a subset $S\subset M$ is HAW on $M$ if and only if $S\cap U_{\alpha}$ is HAW on $U_{\alpha}$ for each $\alpha$.
\item Let $M,N$ be $C^1$ manifolds, and let $S\subset M$ be a HAW subset of $M$. Then $S\times N$ is a HAW subset of $M\times N$.
\end{enumerate}
\end{lemma}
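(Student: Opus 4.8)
The plan is to deduce the whole lemma from the definition of the HAW property on a $C^1$ manifold together with the Euclidean facts recorded in Lemma~\ref{haw-property}; the one genuinely new ingredient, and the backbone of everything else, is that the HAW property is \emph{local}, which is essentially part~(4). I would isolate this first as a statement about the Euclidean game: if $W\subseteq\RR^d$ is open, then HAW on $W$ passes to any open $W'\subseteq W$, and if $W=\bigcup_\alpha W_\alpha$ is an open cover then $A\subseteq W$ is HAW on $W$ iff $A\cap W_\alpha$ is HAW on $W_\alpha$ for all $\alpha$. The implications from $W$ to a smaller set are trivial, since a legal first move there is legal on $W$ and every outcome satisfies $\bigcap_i B_i\subseteq B_0$. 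For the converse, the key point is that in the $\beta$-game with $\beta<\tfrac13$ Alice can force Bob's balls to collapse to a point: answering a ball $B_i$ of radius $\rho_i$ by the $\beta\rho_i$-neighbourhood of a hyperplane through its centre forces any legal $B_{i+1}$ to have radius $\rho_{i+1}\le\tfrac{1-\beta}{2}\rho_i$, so $\rho_i\to 0$ and $\bigcap_iB_i=\{p\}$ with $p\in W=\bigcup_\alpha W_\alpha$. Hence Alice plays such shrinking moves until the first stage $k$ with $B_k\subseteq W_\alpha$ for some $\alpha$ (possible since $p$ lies in the open set $W_\alpha$ and $B_i\to\{p\}$), then runs her winning strategy for $A\cap W_\alpha$ on $W_\alpha$ with legal first move $B_k$; the outcome $\bigcap_iB_i=\bigcap_{i\ge k}B_i$ meets $A\cap W_\alpha\subseteq A$. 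Transporting this through charts and invoking the $C^1$-invariance of HAW (Lemma~\ref{haw-property}(4)) yields part~(4): for an atlas $\{(V_\gamma,\psi_\gamma)\}$ of $M$, $\psi_\gamma(S\cap V_\gamma)$ is HAW on $\psi_\gamma(V_\gamma)=\bigcup_\alpha\psi_\gamma(V_\gamma\cap U_\alpha)$ iff each $\psi_\gamma(S\cap V_\gamma\cap U_\alpha)$ is HAW on $\psi_\gamma(V_\gamma\cap U_\alpha)$, which is exactly ``$S\cap U_\alpha$ is HAW on $U_\alpha$''.

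With (4) and the restriction property in hand, parts (1)--(3) are formal. For (1), given open $V\subseteq M$, pick a chart $(U,\phi)$ with $U\cap V\neq\emptyset$; then $\phi(S\cap U)$ is HAW on $\phi(U)$ by definition, hence $\phi(S\cap U\cap V)$ is HAW on the open set $\phi(U\cap V)\subseteq\RR^d$, hence thick there by the open-subset analogues of Lemma~\ref{haw-property}(1) and the properties of $\alpha$-winning sets recalled at the start of Section~2; since $\phi^{-1}$ is locally bi-Lipschitz, $\dim_H(S\cap V)\ge\dim_H(S\cap U\cap V)=\dim M$. For (2), in each chart $\phi_\alpha\big((\bigcap_nS_n)\cap U_\alpha\big)=\bigcap_n\phi_\alpha(S_n\cap U_\alpha)$ is a countable intersection of HAW subsets of $\phi_\alpha(U_\alpha)$, hence HAW by Lemma~\ref{haw-property}(3). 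For (3), $\{(\phi(U_\alpha),\phi_\alpha\circ\phi^{-1})\}$ is an atlas of $N$ and $(\phi_\alpha\circ\phi^{-1})(\phi(S)\cap\phi(U_\alpha))=\phi_\alpha(S\cap U_\alpha)$ is HAW, so $\phi(S)$ is HAW on $N$, using that HAW may be checked with respect to any atlas.

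For (5), choosing a product atlas $\{(U_\alpha\times V_\beta,\phi_\alpha\times\psi_\beta)\}$ on $M\times N$ reduces the claim to the Euclidean statement: if $A\subseteq\RR^m$ is HAW on an open set $W_1$, then $A\times W_2$ is HAW on $W_1\times W_2\subseteq\RR^{m+k}$. Alice plays a shadowing strategy: to Bob's ball $B_i\subseteq\RR^{m+k}$ she associates the projection $\pi_1(B_i)\subseteq\RR^m$, a ball of equal radius, feeds it to her winning $A$-strategy on $W_1$ to obtain a hyperplane neighbourhood $\ell_i^{(\delta_i)}$ with $\delta_i\le\beta\rho_i$, and answers in the big game with $(\ell_i\times\RR^k)^{(\delta_i)}$. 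Since $B_{i+1}\subseteq B_i\setminus(\ell_i\times\RR^k)^{(\delta_i)}$ forces $\pi_1(B_{i+1})\subseteq\pi_1(B_i)\setminus\ell_i^{(\delta_i)}$ of radius $\ge\beta\rho_i$, the projected moves form a legal play of the $A$-game on $W_1$, which Alice wins: some $a\in A$ satisfies $a\in\pi_1(B_i)$ for all $i$. For this $a$ the fibres $\{y\in\RR^k:(a,y)\in B_i\}$ are nested nonempty closed balls, so they share a point $y$; then $(a,y)\in\bigcap_iB_i$, $a\in A$, and $(a,y)\in B_0\subseteq W_1\times W_2$, so $(a,y)\in\bigcap_iB_i\cap(A\times W_2)$, as needed.

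The lemma is stated as known in \cite{KW3}, and in the write-up one may simply cite it; if it is spelled out, the step I expect to be the main obstacle is the locality principle behind (4) --- specifically, checking that Alice can legally force the nested balls to collapse into a single coordinate chart --- after which the remaining parts are routine bookkeeping with atlases and appeals to Lemma~\ref{haw-property}.
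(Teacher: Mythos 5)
Your argument is correct, but note that the paper itself gives no proof of Lemma~\ref{haw-manifold}: it is stated with a citation to \cite{KW3} (the underlying Euclidean facts going back to \cite{BFKRW}), so there is nothing internal to compare against. What you have written is essentially the standard proof from those references: the collapsing strategy (hyperplane through the centre with $\delta_i=\beta\rho_i$, forcing $\rho_{i+1}\le\tfrac{1-\beta}{2}\rho_i$, legal precisely because $\beta<\tfrac13$) to localize into a chart for part (4), atlas bookkeeping for (1)--(3), and the projection/shadowing strategy for the product statement (5); all of these steps check out, including the facts that $\pi_1(B_i)$ is a ball of the same radius, that $B_{i+1}\subset B_i\setminus(\ell_i\times\RR^k)^{(\delta_i)}$ forces $\pi_1(B_{i+1})\subset\pi_1(B_i)\setminus\ell_i^{(\delta_i)}$, and that the fibre over the winning point is a nested sequence of nonempty compact sets. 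The only thing I would ask you to make explicit is that you repeatedly use open-subset analogues of Lemma~\ref{haw-property}: the countable-intersection property on an open $W\subset\RR^d$ in (2), winning/thickness on open sets in (1), and, for atlas-independence in (3) and (4), invariance under $C^1$ diffeomorphisms \emph{between open subsets} of $\RR^d$, which is slightly more general than Lemma~\ref{haw-property}(4) as stated; these all follow by the same proofs (or from \cite{KW3}), but they should be flagged rather than silently invoked.
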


\subsection{Hyperplane potential game}

Being introduced in \cite{FSU}, the hyperplane potential game also defines a class of subsets of $\RR^{d}$
called \emph{hyperplane potential winning} (\emph{HPW} for short) sets. The following lemma allows one to prove
the HAW property of a set $S\subset \RR^d$ by showing that it is winning for the hyperplane potential game. And this is exactly the game we will use in this paper.
\begin{lemma}(cf. \cite[Theorem C.8]{FSU})\label{HPW}
A subset $S$ of $\RR^{d}$ is HPW if and only if it is HAW.
\end{lemma}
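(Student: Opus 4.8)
The statement coincides with \cite[Theorem C.8]{FSU}, and the plan is to follow that argument, specialised to hyperplane neighbourhoods in $\RR^d$. First I would recall the hyperplane potential game: one fixes parameters $\beta\in(0,1)$ and $c>0$; Bob produces a nested sequence of balls $B_i$ with $\rho_{i+1}\ge\beta\rho_i$, but now on the $i$-th turn Alice names a finite or countable collection of hyperplane neighbourhoods $L_{i,k}^{(\delta_{i,k})}$ subject to a potential bound of the form $\sum_k\delta_{i,k}^{c}\le(\beta\rho_i)^{c}$, with Bob's next ball \emph{not} required to avoid them; Alice wins if the outcome of the play lies in $S$ or in one of the neighbourhoods she named, and $S$ is HPW if Alice has a winning strategy for all admissible $\beta$ and $c$. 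Since the two games differ only in how the players may spend their resources, I would prove the equivalence by two mutual simulations, the real content of each being a scheduling argument.

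For the implication HPW $\Rightarrow$ HAW --- the one actually used in this paper --- I would fix $\beta\in(0,\frac{1}{3})$, assume $S$ is HPW, and turn a winning Alice strategy for a suitable potential game (with the same $\beta$ and a fixed $c$) into a winning Alice strategy for the $\beta$-hyperplane absolute game. Alice runs the potential strategy in the background and must realise each countable family it produces one neighbourhood per turn; she would do this greedily, largest neighbourhood first, using two facts: in the absolute game every hyperplane she deletes is, by nestedness of Bob's balls, avoided by all subsequent balls, hence need be deleted only once; and the potential inequality controls how many neighbourhoods of a given size can appear, which together with the lower bound $\rho_{i+1}\ge\beta\rho_i$ on Bob's shrinking rate should let her stay ahead of the ball as it descends through the scales. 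When the scheduling goes through, the outcome point avoids every neighbourhood the potential strategy ever named, so the potential winning condition forces it into $S$; thus $S$ is $\beta$-HAW, and since $\beta$ was arbitrary, HAW.

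For the converse HAW $\Rightarrow$ HPW, I would have Alice play her absolute strategy inside the potential game, spending part of her per-turn budget on the single neighbourhood that strategy prescribes. The one new point is that Bob need not stay out of that neighbourhood; I would handle this by regarding the play as a virtual absolute game and, whenever Bob re-enters a previously named hyperplane $L$, re-covering $L$ at the current (smaller) scale --- if Bob keeps returning to $L$ his radii shrink geometrically and the outcome point lands on $L$ itself, hence inside one of the positive-width neighbourhoods Alice named, so she wins via the second alternative of the potential winning condition; otherwise the virtual play is eventually a genuine absolute play and her strategy steers the outcome into $S$.

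The hard part, in both directions, is purely the bookkeeping that reconciles the ``one fat obstacle per turn, which Bob must dodge'' format of the absolute game with the ``many thin obstacles of bounded total potential per turn, which Bob may ignore'' format of the potential game --- in particular keeping every simulated play legal, above all the constraint $\rho_{i+1}\ge\beta\rho_i$. Since \cite{FSU} carries this out in full generality and already phrases the equivalence for arbitrary countable families of hyperplane neighbourhoods in $\RR^d$, the cleanest course is to invoke it directly, which is what we do.
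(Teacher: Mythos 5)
Your proposal ends exactly where the paper does: Lemma \ref{HPW} is not proved in the paper but simply quoted from \cite[Theorem C.8]{FSU}, and your concluding move of invoking that result directly is the same approach. The preceding sketch of the two mutual simulations is reasonable background but plays no logical role here, so there is nothing further to check.
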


The hyperplane potential game involves two parameters $\beta\in(0,1)$ and $\gamma>0$.  Bob starts the game
 by choosing a closed ball $B_0\subset \RR^{d}$ of radius $\rho_0$. In the $i$-th turn,
Bob chooses a closed ball $B_i$ of radius $\rho_i$, and then Alice chooses a countable family of hyperplane
neighborhoods $\{L_{i,k}^{(\delta_{i,k})}: k\in \NN\}$ such that
\begin{equation*}
\sum_{k=1}^\infty \delta_{i,k}^\gamma\le(\beta \rho_{i})^\gamma.
\end{equation*}
Then in the $(i+1)$-th turn, Bob chooses a closed ball $B_{i+1}\subset B_i$ of radius $\rho_{i+1}\ge\beta \rho_{i}$. By
this process there is a nested sequence of closed balls
$$B_0\supseteq B_1\supseteq B_2\supseteq \ldots.$$
We say a subset $S\subset \RR^{d}$ is \emph{$(\beta,\gamma)$-hyperplane potential winning}
(\emph{$(\beta,\gamma)$-HPW} for short) if no matter how Bob plays, Alice can ensure that
$$\bigcap_{i=0}^\infty B_i\cap\Big(S\cup\bigcup_{i=0}^\infty\bigcup_{k=1}^\infty
L_{i,k}^{(\delta_{i,k})}\Big)\ne\emptyset.$$ We say $S$ is \emph{hyperplane potential winning}
(\emph{HPW} for short) if it is $(\beta,\gamma)$-HPW for any $\beta\in(0,1)$ and $\gamma>0$.

\section{Converting to the Diophantine setting}
Fix $d\ge 2$. Recall that we have assumed
\begin{equation*}
G=\SL_{d+1}(\RR), \Gamma=\SL_{d+1}(\RR)
\end{equation*}
to simplify notations. That is, the number $n$ in the title of the paper is replaced by $d+1$. Let
\begin{equation*}
\pi:G\to \ggm \text{ be the natural projection.}
\end{equation*}
We will fix a $d$-weight $\rr$ satisfying \eqref{def-r-special} until the last section. For simplicity, sometimes we also write $\lambda=r_1=\cdots=r_{d-1}$, $\mu=r_d$. Both Theorem \ref{main1} and Theorem \ref{main2} will be deduced from the following theorem.
\begin{theorem}\label{main3}
Let $\rr$ be a weight satisfying \eqref{e:tame}. Denote
\begin{equation*}
F_{\br} := \{g_t=\diag(e^{r_1t},e^{r_2t}, \cdots, e^{r_dt}, e^{-t}): t\in \RR\}, \quad  F^+_{\br}:=\{g_t\in F_{\br}:t\geq 0 \},
\end{equation*}
and
\begin{equation}\label{def-u}
U:=\left\{
u_{\bx,y,\bz}: \bx,\bz\in \RR^{d-1}, y\in \RR
\right\}, \quad \text{where } u_{\bx,y,\bz}:=\begin{pmatrix}
  Id&\bz&\bx \\&1&y\\&&1
\end{pmatrix} \in G.
\end{equation}
 Then the set $U\cap \pi^{-1}(E(F^+_\br))$ is HAW on $U$.
\end{theorem}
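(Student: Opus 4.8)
The plan is to show that $U\cap\pi^{-1}(E(F^+_\br))$ is HAW on $U$ by playing the hyperplane potential game (Lemma~\ref{HPW}), which is more flexible than the absolute game since at each stage Alice may discard a whole countable family of hyperplane neighborhoods subject only to a $\gamma$-summability constraint. Using the Dani--Kleinbock correspondence (Lemma~\ref{Dani-Klein}, to be stated) one first reformulates membership in $U\cap\pi^{-1}(E(F^+_\br))$ as a Diophantine condition: $u_{\bx,y,\bz}\Gamma\in E(F^+_\br)$ iff the associated lattice $u_{\bx,y,\bz}\ZZ^{d+1}$ avoids a fixed small ball around the origin for all $t\ge 0$, which in turn translates, via the weight $\rr$ satisfying \eqref{def-r-special}, into a statement that a certain family of weighted-rational-approximation inequalities
\[
\|q x_i\| \ll q^{-r_i},\qquad \|q y\|\ll q^{-r_d}
\]
(suitably coupled, with $\bz$ playing an auxiliary role) fails to have infinitely many good solutions. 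Concretely, $E(F^+_\br)$ corresponds to a ``badly approximable'' type set, and the points of $U$ to be avoided are neighborhoods of the rational affine subspaces cut out by candidate denominators $q\in\NN$ together with numerator vectors $\bp\in\ZZ^d$.

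The heart of the argument, carried out in Sections~3--5, is the construction referred to in the introduction: to each pair $(B,P)$, where $B$ is a closed ball in the parameter space $\RR^{2d-1}$ (coordinates $(\bx,y,\bz)$) reached during Bob's play and $P\in\QQ^d$ is a rational vector representing a potential approximation, one attaches a rational hyperplane and a rational line in $\RR^d$. The key point is that because the weight has the special shape $r_1=\cdots=r_{d-1}\ge r_d$, the ``dangerous'' configurations of denominators that could force the orbit to excurse to the cusp are not spread over all of $\RR^d$ but are confined near these low-dimensional rational objects; this is exactly what makes a \emph{hyperplane} game (rather than a full Schmidt game with balls) succeed. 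Alice's strategy is then: at stage $i$, given Bob's ball $B_i$ of radius $\rho_i$, enumerate the finitely many (up to the summability tail) rational vectors $P$ whose associated affine hyperplane in the $(\bx,y,\bz)$-space passes within $\beta\rho_i$ of $B_i$ and whose denominator lies in the relevant dyadic window determined by $\rho_i$; Alice removes the $\delta_{i,k}$-neighborhoods of these hyperplanes, with $\delta_{i,k}$ chosen from the size of the corresponding denominator so that $\sum_k\delta_{i,k}^\gamma\le(\beta\rho_i)^\gamma$. One checks, using a counting/volume estimate for rational points near a ball together with the contraction rates $e^{r_it}$ and $e^{-t}$ of $F^+_\br$, that the surviving nested intersection $\bigcap_i B_i$ consists of points whose $F^+_\br$-orbit stays in a fixed compact set, hence lies in $\pi^{-1}(E(F^+_\br))$.

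I expect the main obstacle to be precisely the step of organizing the rational data: proving that for a given ball $B$ and a given scale, the rational vectors $P$ that can ever threaten $B$ at that scale really do cluster along a single rational hyperplane (plus a rational line), so that a bounded amount of Alice's ``potential budget'' suffices at each stage. This requires the geometry-of-numbers input — a Minkowski-type argument showing that two independent small approximations with denominators in the same window force a nontrivial linear relation with controlled height — and it is where the hypothesis \eqref{def-r-special} (equivalently the clustering of eigenvalues in \eqref{e:tame}) is genuinely used; without it the threatening rationals would fill up a positive-dimensional family transverse to every hyperplane and the hyperplane game would be lost. The remaining bookkeeping — verifying the summability of the $\delta_{i,k}^\gamma$, handling the interplay of the $\bx$, $y$, $\bz$ blocks, and passing from the potential game back to HAW via Lemma~\ref{HPW} and then to Theorems~\ref{main1} and \ref{main2} in the last section — is then routine.
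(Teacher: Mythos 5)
Your outline correctly identifies the framework — reduction to the Diophantine set $S(\rr)$ via Lemma~\ref{Dani-Klein}, the hyperplane potential game via Lemma~\ref{HPW}, and the idea of attaching a rational hyperplane and line to each pair $(B,P)$ — but it stops exactly where the proof begins, and the concrete strategy you do describe would not work as stated. If Alice removes one neighborhood per threatening rational $P$, with width tied to $q(P)$, then at the critical scale for a dyadic denominator window $q\asymp Q$ (namely when $\rho\sim\epsilon Q^{-(1+\lambda)}$, the only time such a $\Delta_{\epsilon}(P)$ can still be covered by a legally thin hyperplane neighborhood) the number of rationals whose dangerous set meets Bob's ball is of order $Q$ — one for essentially each integer denominator in the window, since the sets $\Delta_{\epsilon}(P)$ are much wider than the ball in the $y$-direction — so $\sum_k\delta_{i,k}^{\gamma}\sim Q\rho^{\gamma}$ overwhelms $(\beta\rho_i)^{\gamma}$ for small $\gamma$. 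This is precisely why a clustering statement is indispensable, and it is not a routine ``Minkowski-type'' aside: it is the entire content of Sections 3--5. What is needed, and what the paper proves, is that \emph{all} rationals in a window $\cV_{B',k}$ whose dangerous sets meet a surviving ball lie on a single rational hyperplane, so that their sets $\Delta_{\epsilon}(P)\cap B'$ fit inside one hyperplane neighborhood $E_k(B)$ of controlled width in the game space $\RR^{2d-1}$ (Proposition~\ref{main prop}); Alice then removes, at the single stage $i_n$, one such neighborhood for each \emph{future} scale $k$, and summability comes for free from the geometric decay $3R^{-(n+k)}\rho_0$ — a strategy structurally different from the per-rational removal you propose.

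Establishing Proposition~\ref{main prop} in turn requires machinery your proposal does not touch: the dual vector $\ba^{+}(B,P)$ and height $H_B(P)$ adapted to the center $\bz_B$, the companion lattice vector $\bv^{+}(B,P)$ defining the attached line, the quantitative estimate of Lemma~\ref{main estimate}, and above all Lemma~\ref{point-hyperplane}, whose hardest case (the attached line $\cL_{B_1,P_1}$ meets the attached hyperplane $\cH_{B_2,P_2}$) produces a \emph{new} rational point $P_0$ with $q_0^{1+\lambda}\le H_n$ and $\Delta_{\epsilon}(P_1)\cap B\subset\Delta_{\epsilon}(P_0)$; ruling this out forces the inductive bookkeeping of the families $\sB_n'$ and Lemma~\ref{impor-lemma} (surviving balls avoid $\Delta_{\epsilon}(P)$ for all $q(P)^{1+\lambda}\le 2H_{n+1}$), together with the stability of the height under passing to sub-balls. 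None of this is sketched in your proposal, and it is exactly where the hypothesis $r_1=\cdots=r_{d-1}\ge r_d$ and the auxiliary $\bz$-coordinates are used. A smaller point: in the potential game Bob is not required to avoid Alice's removed neighborhoods, so the correct win condition is the dichotomy in the paper's final argument (either the limit point lies in $S_{\epsilon}(\rr)$, or it lies in a previously removed neighborhood $E_k(B_{i_{n-k}})^{(3R^{-n}\rho_0)}$), not that the limit point's orbit is bounded outright. As it stands, your text is an accurate description of the goal and of where the difficulty sits, but not a proof.
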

\begin{remark}
 If $\rr$ satisfies $r_1>r_d$ in advance, then the expanding horospherical subgroup $H(F^+_{\br})$ defined as in \eqref{def-horo} coincides with the group $U$ given in \eqref{def-u}. Thus in this case, Theorem \ref{main3} can be regarded as special case of the Theorem \ref{main2} with $\Lambda=\Gamma$.
\end{remark}

\subsection{Diophantine characterization}\label{Dio-char}
For technical reasons, we will prove Theorem \ref{main3} by applying the diffeomorphism $\RR^{2d-1} \to U$ defined as
$$(\bx,y,\bz) \mapsto u_{\bx,y,\bz}^{-1}.$$

\begin{remark}\label{diffeo}
In view of Lemma \ref{haw-manifold}(3), if we can prove the set $$\{(\bx,y,\bz)\in \RR^{2d-1}: F^+_{\br}u_{\bx,y,\bz}^{-1}\Gamma \text{ is bounded in } \ggm \}$$ in HAW on $\RR^{2d-1}$, then Theorem \ref{main3} will follows.
\end{remark}

A rational vector $P\in \QQ^d$ will be always written in the following reduced form:
\begin{equation*}
\text{$P=(\frac{\bp}{q},\frac{r}{q})$, with $q>0$ and $\bp=(p_1,\dots,p_{d-1})$ satifying $gcd(p_1,\dots,p_{d-1},r,q)=1$.}
\end{equation*}
Such a form is unique, thus we may write the denominator of $P$ as a function $q(P)$.

We need the following Diophantine characterization of the boundedness of $F^+_{\br}u_{\bx,y,\bz}^{-1}\Gamma$ in $\ggm$. For $\epsilon>0$ and a rational vector $P=(\frac{\bp}{q},\frac{r}{q}) \in \QQ^d$ written in its reduced form, we denote
\begin{equation*}
\begin{aligned}
\Delta_\epsilon(P):=&\Big\{(\bx,y,\bz)\in\RR^{d-1}\times\RR\times\RR^{d-1}:\Big|y-\frac{r}{q}\Big|<\frac{\epsilon}{q^{1+\mu}},\\
&\left\|\bx-\frac{\bp}{q}-\Big(y-\frac{r}{q}\Big)\bz\right\|_{\infty}<\frac{\epsilon}{q^{1+\lambda}}\Big\},
\end{aligned}
\end{equation*}
where $\|\cdot\|_{\infty}$ means the maximal norm on $\RR^{d-1}$, that is, for $\bx=(x_1,\ldots,x_{d-1})$, $\|\bx\|_{\infty}=\max\{|x_1|,\ldots,|x_d|\}$.
Then we set
\begin{equation*}
S_{\epsilon}(\rr):=\RR^{2d-1}\setminus\bigcup_{P\in\QQ^d}\Delta_{\epsilon}(P)
\end{equation*}
and
\begin{equation*}
S(\rr):=\bigcup_{\epsilon>0}S_{\epsilon}(\rr).
\end{equation*}

The following lemma allows us to convert our problem to the Diophantine setting. For the proof, one can refer to \cite{Kl} (see also \cite[Lemma 3.2]{AGK}).
\begin{lemma}\label{Dani-Klein}(cf. \cite[Theorem 2.5]{Kl})
\label{L:diophantine}
The orbit $F^+_{\br}u_{\bx,y,\bz}^{-1}\Gamma$ is bounded if and only if $(\bx,y,\bz)\in S(\rr)$, that is, there is $\epsilon=\epsilon(\bx,y,\bz)>0$ such that
\begin{equation*}
\max \left\{q^\mu|qy-r|, q^\lambda\|q\bx-\bp-(qy-r)\bz\|_{\infty}\right\}\ge \epsilon, \ \forall P=(\frac{\bp}{q},\frac{r}{q}) \in \QQ^d.
\end{equation*}
\end{lemma}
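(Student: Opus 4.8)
The statement is a Dani-type correspondence (the content of \cite[Theorem 2.5]{Kl}), and the plan is the standard two-step one. First pass from boundedness of the orbit to a uniform-in-$t$ lower bound on the lengths of vectors in the translated lattices, using Mahler's compactness criterion; then carry out an elementary one-parameter minimization over $t\ge 0$ to recover exactly the system of inequalities defining $S(\rr)$.

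The first step is to recall that $\ggm$ is the space of unimodular lattices in $\RR^{d+1}$ and that, by Mahler's criterion, a subset is bounded precisely when there is $\epsilon>0$ such that no lattice in it contains a nonzero vector of length $<\epsilon$. Applied to $F^+_{\br}u_{\bx,y,\bz}^{-1}\Gamma=\{g_t u_{\bx,y,\bz}^{-1}\ZZ^{d+1}:t\ge0\}$ this says the orbit is bounded iff there is $\epsilon>0$ with $\|g_t u_{\bx,y,\bz}^{-1}v\|\ge\epsilon$ for all $t\ge0$ and all $v\in\ZZ^{d+1}\setminus\{\bzero\}$; any fixed norm works and I will use the sup norm. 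Writing $v=(\ba,b,c)$ with $\ba\in\ZZ^{d-1}$ and $b,c\in\ZZ$, and recalling $\lambda=r_1=\cdots=r_{d-1}$, $\mu=r_d$, a direct matrix computation gives
\[
g_t u_{\bx,y,\bz}^{-1}v=\Bigl(e^{\lambda t}\bigl(\ba-b\bz+c(y\bz-\bx)\bigr),\ e^{\mu t}(b-cy),\ e^{-t}c\Bigr),
\]
so that, putting $\Psi(v):=\inf_{t\ge0}\max\bigl\{e^{\lambda t}\|\ba-b\bz+c(y\bz-\bx)\|_\infty,\ e^{\mu t}|b-cy|,\ e^{-t}|c|\bigr\}$, boundedness of the orbit is equivalent to $\inf_{v\ne\bzero}\Psi(v)>0$. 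Two routine reductions then apply. If $c=0$ then $v\ne\bzero$ forces $\ba\ne\bzero$ or $b\ne0$, and since $\lambda>0$, $\mu\ge0$ and $t\ge0$ we get $\max\{e^{\lambda t}\|\ba\|_\infty,e^{\mu t}|b|\}\ge1$, so such $v$ are irrelevant to the infimum. And $\Psi(kv)=|k|\,\Psi(v)$, so it suffices to consider primitive $v$, and after replacing $v$ by $-v$ we may take $c>0$; such $v$ are in bijection with reduced rational vectors $P=(\bp/q,r/q)\in\QQ^d$ via $\bp=\ba$, $r=b$, $q=c$, under which the three entries above become $e^{\lambda t}\|q\bx-\bp-(qy-r)\bz\|_\infty$, $e^{\mu t}|qy-r|$ and $e^{-t}q$.

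The heart of the argument is the resulting one-variable minimization. Abbreviating $A=\|q\bx-\bp-(qy-r)\bz\|_\infty$, $B=|qy-r|$ and substituting $s=e^t\ge1$, I must minimize $\phi(s)=\max\{As^\lambda,Bs^\mu,q/s\}$ over $s\ge1$ — the maximum of two nondecreasing functions and one decreasing function, with $\phi(1)=\max\{A,B,q\}$ and $q\ge1$. If $\max\{A,B\}\ge q$ the minimum is at $s=1$ and equals $\max\{A,B\}\ge1$, and simultaneously $\max\{q^\lambda A,q^\mu B\}\ge1$, so such $P$ affect neither infimum. Otherwise $q/s$ dominates at $s=1$ and the minimum is attained at the crossover point $s^\ast>1$ solving $\max\{A(s^\ast)^{1+\lambda},B(s^\ast)^{1+\mu}\}=q$, i.e.\ $s^\ast=\min\{(q/A)^{1/(1+\lambda)},(q/B)^{1/(1+\mu)}\}$, whence
\[
\Psi(v)=\frac{q}{s^\ast}=\max\Bigl\{\bigl(q^\lambda\|q\bx-\bp-(qy-r)\bz\|_\infty\bigr)^{\frac1{1+\lambda}},\ \bigl(q^\mu|qy-r|\bigr)^{\frac1{1+\mu}}\Bigr\}.
\]
Since $x\mapsto x^{1/(1+\lambda)}$ and $x\mapsto x^{1/(1+\mu)}$ are increasing bijections of $[0,\infty)$ fixing $0$, a positive lower bound on $\Psi(v)$ uniform over all primitive $v$ exists if and only if a positive lower bound on $\max\{q^\lambda\|q\bx-\bp-(qy-r)\bz\|_\infty,\ q^\mu|qy-r|\}$ uniform over all $P\in\QQ^d$ exists; combining this with the two reductions and unwinding the definitions of $\Delta_\epsilon(P)$, $S_\epsilon(\rr)$ and $S(\rr)$ yields exactly the asserted equivalence.

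I do not anticipate a genuine obstacle: once Mahler's criterion is in hand the content is elementary. The only places demanding care are the bookkeeping in passing to primitive vectors with $c>0$, together with the separate (harmless) treatment of $c=0$, and the case split $\max\{A,B\}\gtrless q$ in the minimization — it is the generic case that produces the exponents $\tfrac1{1+\lambda}$, $\tfrac1{1+\mu}$, i.e.\ the $\rr$-weighting built into $\Delta_\epsilon(P)$. Alternatively, as the authors indicate, one may simply cite \cite[Theorem 2.5]{Kl} or \cite[Lemma 3.2]{AGK}.
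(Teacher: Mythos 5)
Your argument is correct, and it supplies a complete proof where the paper itself gives none: the authors simply cite \cite[Theorem 2.5]{Kl} (and \cite[Lemma 3.2]{AGK}) for this lemma, so there is no in-paper proof to compare against. What you have done is reprove the cited Dani--Kleinbock correspondence in this specific coordinate setting: Mahler's compactness criterion reduces boundedness to a uniform lower bound on $\inf_{t\ge0}\|g_tu_{\bx,y,\bz}^{-1}v\|$ over nonzero integer $v$, the computation of $u_{\bx,y,\bz}^{-1}$ correctly produces the three linear forms $q\bx-\bp-(qy-r)\bz$, $qy-r$, $q$, and the one-variable minimization over $s=e^t\ge1$ yields $\min_{t\ge 0}\max\{\cdot\}=\max\{(q^\lambda A)^{1/(1+\lambda)},(q^\mu B)^{1/(1+\mu)}\}$ in the relevant regime $\max\{A,B\}<q$, from which the equivalence with membership in $S(\rr)$ follows by monotonicity of $x\mapsto x^{1/(1+\lambda)}$, $x\mapsto x^{1/(1+\mu)}$ (the degenerate case $A=B=0$ also comes out consistently on both sides). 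The trade-off is the usual one: the citation is shorter and defers to a theorem proved in greater generality, while your computation is self-contained and makes visible exactly how the weights $(\lambda,\mu)$ in $\Delta_\epsilon(P)$ arise from the flow exponents. One small slip to fix: in the $c=0$ case the first block of $g_tu_{\bx,y,\bz}^{-1}v$ is $e^{\lambda t}(\ba-b\bz)$, not $e^{\lambda t}\ba$, so the bound $\max\{e^{\lambda t}\|\ba\|_\infty,e^{\mu t}|b|\}\ge1$ is not literally the norm you need; the correct (and equally easy) argument is the case split: if $b\ne0$ then $e^{\mu t}|b|\ge1$, and if $b=0$ then the first block is exactly $e^{\lambda t}\ba$ with $\ba\ne\bzero$, so again the sup norm is at least $1$.
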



\subsection{Attaching hyperplanes}\label{subsc-hyperplane}

Let $\sB$ denote the set of closed balls in $\RR^{2d-1}$ with radius smaller than $1/d$. We shall introduce a function
\begin{equation*}
\ba^+: \sB\times \QQ^d \to \ZZ^d
\end{equation*}
below which enables us to define a linear function on $\RR^d$ that depends on the pair of a closed ball $B\in\sB$ and $ P=(\frac{\bp}{q},\frac{r}{q}) \in \QQ^d$:
\begin{equation}\label{def-f-bp}
F_{B,P}(\bw)=\ba^+(B,P)\cdot\bw-\ba^+(B,P)\cdot\left(\frac{\bp}{q},\frac{r}{q}\right), \quad \bw\in \RR^d.
\end{equation}
We also write for simplicity
\begin{equation}\label{def-c-bp}
 C(B,P)=\ba^+(B,P)\cdot\left(\frac{\bp}{q},\frac{r}{q}\right).
 \end{equation}
Finally we can define a hyperplane attached to the pair $(B,P)$ to be
\begin{equation*}
\cH_{B,P}:=\{\bw\in \RR^d: F_{B,P}(\bw)=0\}.
\end{equation*}

Now let us define the function $\ba^+: \sB\times \QQ^d \to \ZZ^d$. We shall need the following lemma:
\begin{lemma}\label{L:BPV}
Let $\bz\in\RR^{d-1}$. For any $P=(\frac{\bp}{q},\frac{r}{q}) \in \QQ^d$, there exists $(\ba,b)\in\ZZ^{d}$ with $(\ba,b)\ne(\bzero,0)$ such that $\ba\cdot\bp+br\in q\ZZ$ and $\|\ba\|_{\infty}\le q^\lambda$, $|b+\bz\cdot\ba|\le q^\mu$.
\end{lemma}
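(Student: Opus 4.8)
The plan is to deduce this from Minkowski's linear forms theorem (or equivalently Minkowski's convex body theorem) applied to a suitable symmetric convex body in $\RR^d$. Fix $\bz\in\RR^{d-1}$ and a reduced $P=(\tfrac{\bp}{q},\tfrac{r}{q})$. We seek $(\ba,b)\in\ZZ^d\setminus\{(\bzero,0)\}$ subject to three constraints: the congruence $\ba\cdot\bp+br\in q\ZZ$, the size bound $\|\ba\|_\infty\le q^\lambda$ on the first $d-1$ coordinates, and the bound $|b+\bz\cdot\ba|\le q^\mu$ on a linear combination involving the last coordinate. The first move is to handle the congruence by working on the sublattice
\begin{equation*}
\Lambda_P:=\{(\ba,b)\in\ZZ^d:\ \ba\cdot\bp+br\equiv 0\ (\mathrm{mod}\ q)\},
\end{equation*}
which, because $\gcd(p_1,\dots,p_{d-1},r,q)=1$, is a subgroup of $\ZZ^d$ of index exactly $q$, hence has covolume $q$.

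Next I would introduce the unimodular linear change of variables $T:\RR^d\to\RR^d$ given by $T(\ba,b)=(\ba,\,b+\bz\cdot\ba)$; this is volume-preserving (its matrix is unipotent, upper/lower triangular with $1$'s on the diagonal), so $T(\Lambda_P)$ is still a lattice of covolume $q$. In the new coordinates $(\ba,b')$ with $b'=b+\bz\cdot\ba$, the target region is simply the box
\begin{equation*}
K:=\{(\ba,b')\in\RR^d:\ \|\ba\|_\infty\le q^\lambda,\ |b'|\le q^\mu\},
\end{equation*}
a symmetric convex body with volume $2^{d-1}q^{(d-1)\lambda}\cdot 2q^\mu=2^d q^{(d-1)\lambda+\mu}$. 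Since the weight satisfies $(d-1)\lambda+\mu=r_1+\dots+r_{d-1}+r_d=1$, this volume equals $2^d q$, which is exactly $2^d$ times the covolume of $T(\Lambda_P)$. By Minkowski's convex body theorem, $K$ contains a nonzero point of $T(\Lambda_P)$; pulling it back through $T^{-1}$ gives the desired $(\ba,b)\in\Lambda_P\setminus\{(\bzero,0)\}$ with $\|\ba\|_\infty\le q^\lambda$ and $|b+\bz\cdot\ba|\le q^\mu$, and the membership in $\Lambda_P$ is precisely $\ba\cdot\bp+br\in q\ZZ$. (Strictly, Minkowski's theorem with equality in the volume bound yields a nonzero lattice point in the closed body, which is what is claimed since all inequalities in the statement are non-strict; alternatively one enlarges $K$ slightly and uses a compactness/limiting argument.)

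The only real point requiring care is verifying that $[\ZZ^d:\Lambda_P]=q$, i.e.\ that the linear functional $(\ba,b)\mapsto\ba\cdot\bp+br$ is surjective onto $\ZZ/q\ZZ$; this is where the coprimality condition $\gcd(p_1,\dots,p_{d-1},r,q)=1$ from the reduced form of $P$ enters, since surjectivity of $(\ba,b)\mapsto\ba\cdot\bp+br\pmod q$ is equivalent to $\gcd(p_1,\dots,p_{d-1},r,q)=1$. Everything else is a routine volume computation together with the defining identity $\sum_{i=1}^d r_i=1$ for a $d$-weight. I do not expect any genuine obstacle; the subtlety, if any, is purely in phrasing the boundary case of Minkowski's theorem, which is standard.
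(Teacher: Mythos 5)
Your argument is correct. It is the same geometry-of-numbers idea as the paper, but implemented slightly differently, and the comparison is worth recording. The paper applies Minkowski's linear forms theorem in $d+1$ integer variables $(\ba,b,c)$, using the extra variable to encode the congruence via the form $\ba\cdot\bp+br+cq$ with the \emph{strict} bound $<1$ (so the boundary case is already built into the cited theorem, Cassels Ch.~III, Thm.~III), and then needs a one-line extra argument to rule out solutions with $(\ba,b)=(\bzero,0)$, $c\neq0$. You instead stay in dimension $d$, encode the congruence as the index-$q$ sublattice $\{(\ba,b)\in\ZZ^d:\ba\cdot\bp+br\equiv0\ (\mathrm{mod}\ q)\}$ (your index computation via surjectivity of the reduction map, using $\gcd(p_1,\dots,p_{d-1},r,q)=1$, is right), apply the unimodular shear $(\ba,b)\mapsto(\ba,b+\bz\cdot\ba)$, and invoke Minkowski's convex body theorem for the closed box of volume $2^d q^{(d-1)\lambda+\mu}=2^dq$. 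What your route buys is that the nonvanishing of $(\ba,b)$ is automatic; what it costs is that you must invoke the equality/boundary case of the convex body theorem (volume exactly $2^d$ times the covolume), which you correctly flag and which is standard for a compact symmetric convex body. One cosmetic caution: the symbol $\Lambda_P$ is already used in the paper (Subsection on attaching lines) for the lattice $\ZZ^d+\ZZ(\frac{\bp}{q},\frac{r}{q})$ of covolume $1/q$, which is a different lattice from your congruence sublattice of covolume $q$, so you should rename yours to avoid a clash.
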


\begin{proof}
By Minkowski's linear forms theorem (cf. \cite[Chapter III, Theorem III]{Ca}), there exist $\ba\in \ZZ^{d-1},b,c\in\ZZ$ which are not all zero, such that
$$|\ba\cdot\bp+br+cq|<1,\quad \|\ba\|_{\infty}\le q^\lambda, \quad |b+\bz\cdot\ba|\le q^\mu.$$
Since $\ba\cdot\bp+br+cq\in\ZZ$, it must be $0$ by the first inequality above. Assume that $\ba=\bzero$ and $b=0$. Then it follows from $\ba\cdot\bp+br+cq=0$ and $q\ne0$ that $c=0$, which is a contradiction. Thus $(\ba,b)\ne(\bzero,0)$. The lemma follows.
\end{proof}

Now let us consider the following set
\begin{align*}
\sA_{B,P}:=\big\{(\ba,b)\in\ZZ^{d}: \ &  (\ba,b)\ne(\bzero,0), \ba\cdot\bp+br\in q\ZZ, \\
& \|\ba\|_{\infty}\le q^\lambda, |b+\bz_B\cdot \ba|\le q^\mu+\rho(B)^{\frac{1}{2}} \big\},
\end{align*}
where $\bz_B$ is the $\bz$-coordinate of the center of $B$ and $\rho(B)$ is the radius of $B$. It follows from Lemma \ref{L:BPV} that $\sA_{B,P}\ne\emptyset$. We choose and fix
$$\ba^+(B,P)=\big(\ba(B,P),b(B,P)\big)\in\sA_{B,P}$$
such that
\begin{align}
\xi(B,P):=&\ \max\Big\{\|\ba(B,P)\|_{\infty}, |b(B,P)+\bz_B \cdot\ba(B,P)|\Big\} \notag\\
=&\ \min\Big\{\max\big\{\|\ba\|_{\infty}, |b+\bz_B\cdot \ba|\big\}:(\ba,b)\in\sA_{B,P}\Big\}. \label{E:max}
\end{align}
This completes the definition of the function $\ba^+$.
Then we define the \emph{height of $P$ with respect to $B$}:
$$H_B(P):=q(P)\xi(B,P).$$

\begin{remark}
From its definition, one can see that the height function $H_B(P)$ is not canonically defined, i.e. it may depend on a choice. But we have the following lemma  controlling the size of $H_B(P)$.
\end{remark}

\begin{lemma}\label{L:q}
For any $(B,P)\in \sB\times \QQ^d$, we have
\begin{equation}\label{E:qq}
q(P)\le H_B(P)\le q(P)^{1+\lambda}.
\end{equation}
\end{lemma}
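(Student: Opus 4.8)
\textbf{Proof plan for Lemma \ref{L:q}.}

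The plan is to read both inequalities directly off the definition $H_B(P)=q(P)\,\xi(B,P)$ together with the characterization of $\xi(B,P)$ in \eqref{E:max} as a minimum over $\sA_{B,P}$. For the upper bound, I would simply exhibit one element of $\sA_{B,P}$ whose associated $\max\{\|\ba\|_{\infty},|b+\bz_B\cdot\ba|\}$ is at most $q^{\lambda}$, and invoke the minimality of $\xi(B,P)$. The natural candidate is the vector $(\ba,b)$ produced by Lemma \ref{L:BPV}: it lies in $\ZZ^d\setminus\{(\bzero,0)\}$, satisfies $\ba\cdot\bp+br\in q\ZZ$, and obeys $\|\ba\|_\infty\le q^\lambda$ and $|b+\bz\cdot\ba|\le q^\mu$. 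Here $\bz$ in Lemma \ref{L:BPV} is a free parameter, so I would apply it with $\bz=\bz_B$; then $|b+\bz_B\cdot\ba|\le q^\mu\le q^\mu+\rho(B)^{1/2}$, so indeed $(\ba,b)\in\sA_{B,P}$. Since $\lambda\ge\mu$ (recall $\lambda=r_1=\cdots=r_{d-1}\ge r_d=\mu$ from \eqref{def-r-special}), we get $\max\{\|\ba\|_\infty,|b+\bz_B\cdot\ba|\}\le\max\{q^\lambda,q^\mu\}=q^\lambda$, and hence $\xi(B,P)\le q^\lambda$, giving $H_B(P)=q\,\xi(B,P)\le q^{1+\lambda}=q(P)^{1+\lambda}$.

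For the lower bound $q(P)\le H_B(P)$, i.e. $\xi(B,P)\ge 1$, I would argue that no element of $\sA_{B,P}$ can have $\max\{\|\ba\|_\infty,|b+\bz_B\cdot\ba|\}<1$. Suppose $(\ba,b)\in\sA_{B,P}$ with $\|\ba\|_\infty<1$ and $|b+\bz_B\cdot\ba|<1$. Since $\ba\in\ZZ^{d-1}$ and $\|\ba\|_\infty<1$ forces $\ba=\bzero$; then $|b+\bz_B\cdot\ba|=|b|<1$ with $b\in\ZZ$ forces $b=0$, contradicting $(\ba,b)\ne(\bzero,0)$. Hence every element of $\sA_{B,P}$ has $\max\{\|\ba\|_\infty,|b+\bz_B\cdot\ba|\}\ge 1$; taking the minimum, $\xi(B,P)\ge 1$, so $H_B(P)=q(P)\,\xi(B,P)\ge q(P)$.

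I expect no real obstacle here: both bounds are immediate consequences of the definitions, the key observation for the upper bound being that Lemma \ref{L:BPV} may be applied with the specific choice $\bz=\bz_B$ so that its output lands in the enlarged set $\sA_{B,P}$, and the key observation for the lower bound being the integrality of $(\ba,b)$ combined with $(\ba,b)\ne(\bzero,0)$. The only point one must be slightly careful about is the role of the enlargement by $\rho(B)^{1/2}$ in the definition of $\sA_{B,P}$: it is harmless for the upper bound (it only makes $\sA_{B,P}$ larger, hence $\xi$ possibly smaller, which is in the right direction) and irrelevant for the lower bound (since the argument only uses integrality, not the precise bound $q^\mu+\rho(B)^{1/2}$).
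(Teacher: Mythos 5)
Your proposal is correct and follows essentially the same route as the paper: the upper bound by applying Lemma \ref{L:BPV} with $\bz=\bz_B$ to produce an element of $\sA_{B,P}$ and invoking the minimality in \eqref{E:max} together with $\lambda\ge\mu$, and the lower bound from the integrality of $(\ba,b)\ne(\bzero,0)$, which the paper dismisses as ``clear from the definition'' and you simply spell out.
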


\begin{proof}
Write $q(P)$ simply as $q$, the first inequality is clear from the definition. By Lemma \ref{L:BPV}, $\sA_{B,P}$ contains a vector $(\ba_0,b_0)$ with $\|\ba_0\|_{\infty}\le q^\lambda$ and $|b_0+\bz_B \cdot\ba_0|\le q^\mu$. Thus, it follows from \eqref{E:max} that
\begin{equation*}
\begin{aligned}
\max\Big\{\|\ba(B,P)\|_{\infty}, |b(B,P)+\bz_B \cdot \ba(B,P)|\Big\}&\le\max\Big\{\|\ba_0\|_{\infty} ,|b_0+\bz_B \cdot \ba_0|\Big\} \\
&\le\max\{q^\lambda,q^\mu\}=q^\lambda.
\end{aligned}
\end{equation*}
The second inequality follows.
\end{proof}

\begin{remark}
It follows from the definition of $\ba^+(B,P)$ that $C(B,P)\in \ZZ$, thus the coefficients of $F_{B,P}$ belong to $\ZZ$.
\end{remark}

\subsection{Attaching lines}
We shall define another function
\begin{equation*}
\bv^+: \sB\times \QQ^d \to \QQ^d
\end{equation*}
in this subsection. The function $\bv^+(*,P)$ takes values in the lattice $\Lambda_P$ which is defined as follows:
$$\Lambda_P=\ZZ^d+\ZZ\left(\frac{\bp}{q},\frac{r}{q}\right),\quad \text{where } P=\left(\frac{\bp}{q},\frac{r}{q}\right).$$
The line attached to the pair $(B,P)$ is defined to be
\begin{equation*}
\cL_{B,P}:=\left\{\bw\in \RR^d: \bw-\left(\frac{\bp}{q},\frac{r}{q}\right)=t \bv^+(B,P), \quad t\in \RR\right\}.
\end{equation*}

The definition of the function $\bv^+$ is given in the following lemma.
\begin{lemma}
For any $(B,P)\in \sB\times \QQ^d$, there exists a non-zero vector
$$\bv^+(B,P)=(\bv(B,P),u(B,P))\in \Lambda_P$$
with $\bv(B,P)\in \RR^{d-1}, u(B,P)\in \RR$ such that
\begin{equation}\label{def-parallel}
\|\bv(B,P)-u(B,P)\bz_B\|_{\infty}\le 2dq(P)^{-\lambda}, \quad |u(B,P)|\le 2d\xi(B,P)q(P)^{-\lambda-\mu}.
\end{equation}
\end{lemma}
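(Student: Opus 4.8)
\emph{Proof idea.}
The plan is to produce $\bv^+(B,P)$ as a short vector of the lattice $\Lambda_P=\ZZ^d+\ZZ P$ via Minkowski's theorem. Write $q=q(P)$, $\xi=\xi(B,P)$, and let $R$ be the symmetric box $\{(\bv,u)\in\RR^{d-1}\times\RR:\|\bv-u\bz_B\|_{\infty}\le 2dq^{-\lambda},\ |u|\le 2d\xi q^{-\lambda-\mu}\}$, i.e.\ the region in which we want a nonzero lattice point. Applying Minkowski directly to $\Lambda_P$ (of covolume $q^{-1}$) and $R$ would work if $\xi$ were of order $q^{\lambda}$, but it breaks down when $\xi$ is small; and $\xi$ small is exactly the situation in which $\ba^+(B,P)$ is an unusually short vector of the dual lattice $\Lambda_P^{\ast}=\{\bw\in\ZZ^d:\bw\cdot P\in\ZZ\}$. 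The main idea — and the step I expect to be the real obstacle — is therefore to exploit that short dual vector and run Minkowski's theorem inside the hyperplane it determines.

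First I would record what $\ba^+(B,P)=(\ba,b)$ supplies. Since $\ba^+(B,P)\in\sA_{B,P}$ we have $\ba^+(B,P)\in\ZZ^d$ and $\ba\cdot\bp+br\in q\ZZ$; the latter says precisely that $\ba^+(B,P)\cdot(\bp/q,r/q)\in\ZZ$, so $\ba^+(B,P)$ is a nonzero element of $\Lambda_P^{\ast}$. The minimality \eqref{E:max} defining $\xi(B,P)$ gives $\|\ba\|_{\infty}\le\xi$ and $|b+\bz_B\cdot\ba|\le\xi$, while the defining inequality of $\sA_{B,P}$, together with $\rho(B)<1/d\le 1\le q^{\mu}$, yields the second, crucial estimate $|b+\bz_B\cdot\ba|\le q^{\mu}+\rho(B)^{1/2}<2q^{\mu}$.

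Next I would apply the volume‑preserving shear $T\colon(\bv,u)\mapsto(\bv-u\bz_B,u)$ and set $\widetilde\Lambda=T\Lambda_P$, of covolume $q^{-1}$. Under the contragredient action on dual lattices the vector $\ba^+(B,P)$ becomes $\widetilde\psi=(\ba,\widetilde b)$ with $\widetilde b=b+\bz_B\cdot\ba$, so $\widetilde\psi\in\widetilde\Lambda^{\ast}\setminus\{\bzero\}$, $\|\ba\|_{\infty}\le\xi$ and $q^{-\mu}|\widetilde b|\le 2$. Consider the rank‑$(d-1)$ lattice $\Lambda^0=\widetilde\Lambda\cap\widetilde\psi^{\perp}$; by the standard formula (via the primitive vector of $\widetilde\Lambda^{\ast}$ on the ray through $\widetilde\psi$) its covolume in $\widetilde\psi^{\perp}$ is at most $\|\widetilde\psi\|_2\,q^{-1}$. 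In the sheared coordinates $R$ is the box $\{\|\by\|_{\infty}\le 2dq^{-\lambda},\ |u|\le 2d\xi q^{-\lambda-\mu}\}$, of volume $(4d)^d\xi\,q^{-1-\lambda}$ by the weight relation $(d-1)\lambda+\mu=1$. Since $R$ is symmetric, Brunn's inequality on parallel sections shows that the section by $\widetilde\psi^{\perp}$ is the largest among those parallel to it, hence $\Vol_{d-1}(R\cap\widetilde\psi^{\perp})\ge\frac{\|\widetilde\psi\|_2}{2h}\Vol(R)$ with $h=\max_{x\in R}\widetilde\psi\cdot x=2dq^{-\lambda}\|\ba\|_1+2d\xi q^{-\lambda-\mu}|\widetilde b|$. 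Combining these, the inequality $\Vol_{d-1}(R\cap\widetilde\psi^{\perp})\ge 2^{d-1}\mathrm{covol}(\Lambda^0)$ reduces to $(2d)^{d-1}\xi\ge\|\ba\|_1+\xi q^{-\mu}|\widetilde b|$, whose right‑hand side is $\le(d-1)\xi+2\xi=(d+1)\xi\le(2d)^{d-1}\xi$ for $d\ge 2$. Then Minkowski's convex body theorem in $\widetilde\psi^{\perp}\cong\RR^{d-1}$, applied to $\Lambda^0$ and $R\cap\widetilde\psi^{\perp}$, gives a nonzero $(\by,u)\in\Lambda^0\subset\widetilde\Lambda$ lying in $R$; undoing the shear, $\bv^+(B,P):=T^{-1}(\by,u)=(\by+u\bz_B,\,u)\in\Lambda_P$ is nonzero, and since $\bv(B,P)-u(B,P)\bz_B=\by$ and $u(B,P)=u$, it satisfies both inequalities of \eqref{def-parallel}.

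The one genuinely delicate point is the passage to the hyperplane $\widetilde\psi^{\perp}$: the box $R$ has too little volume relative to $\mathrm{covol}(\widetilde\Lambda)$ precisely when $\xi$ is small, and it is the two elementary bounds on $\widetilde b$ — one from the minimality defining $\xi(B,P)$, the other from the apparently harmless slack $\rho(B)^{1/2}<1\le q^{\mu}$ in the definition of $\sA_{B,P}$ — that keep both $\mathrm{covol}(\Lambda^0)$ and the tilt $h$ small enough for the Brunn‑type section estimate to restore the volume comparison after restriction. Everything else is routine bookkeeping with the constants and with $(d-1)\lambda+\mu=1$.
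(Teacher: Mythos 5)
Your argument is correct: every step checks out (the dual interpretation $\ba^+(B,P)\in\Lambda_P^{\ast}$, the shear sending it to $(\ba,b+\bz_B\cdot\ba)$, the bound $\mathrm{covol}(\widetilde\Lambda\cap\widetilde\psi^{\perp})\le\|\widetilde\psi\|_2q^{-1}$, the Brunn section estimate, and the arithmetic reduction to $(2d)^{d-1}\xi\ge\|\ba\|_1+\xi q^{-\mu}|b+\bz_B\cdot\ba|$, which indeed follows from $\|\ba\|_{\infty},|b+\bz_B\cdot\ba|\le\xi$ and $|b+\bz_B\cdot\ba|\le q^{\mu}+\rho(B)^{1/2}<2q^{\mu}$), and it lands exactly in the target region \eqref{def-parallel}. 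The germ of the idea is the same as in the paper — the box is too small for a direct application of Minkowski precisely when $\xi$ is small, and the remedy is to hunt for the lattice point inside the hyperplane $\ba^+(B,P)\cdot\bw=0$ — but the execution differs. The paper never forms the section lattice: it splits into two cases according to whether the maximum of $|a_1|q^{-\lambda},\dots,|a_{d-1}|q^{-\lambda},|b+\bz_B\cdot\ba|\xi q^{-\lambda-\mu}$ is attained at some coordinate $k\le d-1$ or at the last entry, drops the corresponding box constraint and replaces it by the slab $|\ba^+\cdot\bw|<1$, applies Minkowski in $\RR^d$ to $\Lambda_P$ itself (the relaxed direction restores the volume to $\ge 2^dq^{-1}$), and then, since integrality forces $\ba^+\cdot\tilde{\bw}=0$, solves this relation for the dropped coordinate to recover the missing inequality with constants $(d-1)$ resp.\ $2(d-1)$. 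So the paper's route is more elementary (only Minkowski in dimension $d$, no covolume formula for lattice sections, no Brunn concavity) at the price of a case analysis and ad hoc auxiliary bodies, whereas yours is case-free and more conceptual, making explicit the geometric mechanism (duality plus passage to the orthogonal hyperplane) that the paper's slab trick implements implicitly; either is a complete proof of the lemma.
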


\begin{proof}
Write $q(P)$ simply as $q$. It is easy to check that $d(\Lambda_P)=1/q$, where $d(\Lambda_P)$ denotes the covolume of the lattice $\Lambda_P$. We will make use of the vector $\ba^+(B,P)$ constructed in the previous subsection. For simplicity, write $\ba^+(B,P),\ba(B,P),a_i(B,P),b(B,P),\xi(B,P),\bz_B$ as $\ba^+,\ba,a_i,b,\xi,\bz$ respectively.

We have the following two distinct cases:
\begin{enumerate}
  \item \textbf{Case  $|a_k|q^{-\lambda}=\max(|a_1|q^{-\lambda},\cdots,|a_{d-1}|q^{-\lambda},|b+\bz\cdot \ba|\xi q^{-\lambda-\mu})$, where $1\le k\le d-1$.}\\
      Then it is obvious that $a_k\ne 0$. 
      Consider the convex body
      \begin{align*}
      \Sigma_k:=\big\{\bw=(w_1,\ldots,w_d)\in \RR^d: \ &|w_i-z_iw_d|\le q^{-\lambda}, i\ne k,d;\\
       &|w_d|\le \xi q^{-\lambda-\mu}; \quad |\ba^+\cdot\bw|<1 \big\}.
      \end{align*}
      A direct computation shows that
      \begin{equation*}
      2^{-d}\Vol(\Sigma_k)=|a_k|^{-1}\xi q^{-\lambda-\mu}\prod_{i\ne k,d}q^{-\lambda}=|a_k|^{-1}\xi q^{-1}\ge q^{-1}.
      \end{equation*}
  Hence there is a non-zero $\Lambda_P$-lattice point $\tilde{\bw}=(\tilde{w}_1,\ldots,\tilde{w}_d)$ in $\Sigma_k$. Moreover, since $|\ba^+\cdot\tilde{\bw}|<1$ implies $\ba^+\cdot\tilde{\bw}=0$, we have
  \begin{align*}
  |\tilde{w}_k-z_k\tilde{w}_d|&=|a_k|^{-1}\Big|\sum_{i\ne k,d}a_i(\tilde{w}_i-z_i\tilde{w}_d)+(b+\bz\cdot \ba)\tilde{w}_d \Big|\\
              &\le |a_k|^{-1} \Big(\sum_{i\ne k,d}|a_i||\tilde{w}_i-z_i\tilde{w}_d|+|b+\bz\cdot \ba||\tilde{w}_d|\Big)\\
              &\le |a_k|^{-1} \Big(\sum_{i\ne k,d}|a_i|q^{-\lambda}+|b+\bz\cdot \ba|\xi q^{-\lambda-\mu}\Big)\\
              &\le (d-1)q^{-\lambda}.
  \end{align*}

  \item \textbf{Case $|b+\bz\cdot \ba|\xi q^{-\lambda-\mu}=\max(|a_1|q^{-\lambda},\cdots,|a_{d-1}|q^{-\lambda},|b+\bz\cdot \ba|\xi q^{-\lambda-\mu})$}.\\
  Then we consider the convex body
      \begin{equation*}
      \Sigma_d:=\left\{\bw=(w_1,\ldots,w_d)\in \RR^d: |w_i-z_iw_d|\le 2q^{-\lambda}, i\ne d; \quad |\ba^+\cdot\bw|<1\right\}.
      \end{equation*}
      A direct computation shows that
      \begin{equation*}
      2^{-d}\Vol(\Sigma_d)=|b+\bz\cdot \ba|^{-1}\prod_{i\ne d}2q^{-\lambda}\ge 2^{d-1}(q^\mu+1)^{-1}q^\mu q^{-1}\ge q^{-1}.
      \end{equation*}
      Thus there is a non-zero $\Lambda_P$-lattice point $\tilde{\bw}=(\tilde{w}_1,\ldots,\tilde{w}_d)$ in $\Sigma_k$. Similarly we have
      \begin{align*}
         |\tilde{w}_d|&=|b+\bz\cdot \ba|^{-1}\Big|\sum_{i\ne d}a_i(\tilde{w}_i-z_i\tilde{w}_d) \Big|\\
              &\le |b+\bz\cdot \ba|^{-1} \Big(\sum_{i\ne d}|a_i||\tilde{w}_i-z_i\tilde{w}_d|\Big)\\
              &\le |b+\bz\cdot \ba|^{-1} \Big(\sum_{i\ne d}2|a_i|q^{-\lambda}\Big)\\
              &\le 2(d-1)\xi q^{-\lambda-\mu}.
  \end{align*}
\end{enumerate}
In each case above we set $\bv^+(B,P)=\bw$ and this completes the proof .
\end{proof}

\begin{remark}
Let $\Pi_{B,P}$ denote the subset of $\RR^d$ defined by the inequalities given in \eqref{def-parallel}. Note that the volume of $\Pi_{B,P}$ may be smaller than $1/q$, so the above lemma does not follow directly from Minkowski's linear forms Theorem.
\end{remark}

\section{Some subdivisions}\label{dec-q d}
As aforementioned, we will use the hyperplane potential game in establishing Theorem \ref{main3}. This section is devoted to some preparations for playing hyperplane potential game on $U$ defined in \eqref{def-u}. Hence we will fix $\beta\in(0,1)$ and $\gamma>0$, and a closed ball $B_0\in\sB$ in this section. We are going to define subfamilies $\sB_n(n\ge 0)$ of $\sB$ and decompositions of $\QQ^d$ with respect to the $\beta,\gamma$ and $B_0$ given.

Firstly, denote
\begin{equation*}
\kappa:= \max_{(\bx,y,\bz)\in B_0}\max\{\|\bx\|_{\infty},|y|,\|\bz\|_{\infty}\}+1
\end{equation*}
Then choose a positive number $R$ satisfying
\begin{equation}\label{def-r}
R\ge\max\{4\beta^{-1},10^4d^6\kappa^4\},
\text{\ \ and\ } (R^\gamma-1)^{-1}\leq\left(\frac{\beta^2}{3}\right)^\gamma,
\end{equation}
and set
\begin{equation}\label{values}
 \epsilon=10^{-2}d^{-6}\kappa^{-2}R^{-20d^2}\rho_0.
\end{equation}

Let $\sB_0=\{B_0\}$.
For $n\ge1$, let $\sB_n$ be the subfamily of $\sB$ defined by
\begin{equation*}
\sB_n:=\{B\subset B_0:\beta R^{-n}\rho_0<\rho(B)\le R^{-n}\rho_0\}.
\end{equation*}
In view of \eqref{def-r}, the families $\sB_n$ are mutually disjoint.

Let $n\ge0$, and fix a closed ball $B\in\sB_n$ in this paragraph. We define
$$\cV_B:=\left\{ P \in \QQ^d: H_n\le H_B(P)\le 2H_{n+1}\right\}$$
where
$$H_n=2d^2\epsilon \kappa \rho_0^{-1}R^{n+1}.$$
It follows from \eqref{E:qq} that if $P\in\cV_B$, then
\begin{equation*}
H_n^{\frac{1}{1+\lambda}}\le q(P)\le 2H_{n+1}.
\end{equation*}
We shall also need the following subdivisions of $\cV_{B}$:
$$\cV_{B,1}:=\left\{P\in\cV_B: H_n^{\frac{1}{1+\lambda}}\le q(P)\le H_n^{\frac{1}{1+\lambda}}R^{10d^2}\right\}$$

$$\cV_{B,k}:=\left\{P\in\cV_B: H_n^{\frac{1}{1+\lambda}}R^{10d^2+(2k-4)d}\le q(P)\le H_n^{\frac{1}{1+\lambda}}R^{10d^2+(2k-2)d}\right\}, \quad k\ge2.$$
One can show an important inequality here: for $P\in \cV_{B,k}, k\ge2, $
\begin{equation}\label{ine qxi}
\frac{\xi(B,P)}{q(P)^\lambda}=\frac{H_B(P)}{q(P)^{1+\lambda}}\le \frac{2H_{n+1}}{H_nR^{(1+\lambda)(10d^2+(2k-4)d)}}\le 2R^{-8d^2-2kd+1}.
\end{equation}

Now we define a subfamily $\sB_n'$ of $\sB_n$ inductively as follows. Let $\sB_0'=\{B_0\}$. If $n\ge 1$ and $\sB_{n-1}'$ has been defined, we let
\begin{equation*}
\sB_n':=\left\{B\in \sB_n: B\subset B' \text{ for some } B'\in \sB_{n-1}', \text{ and } B\cap \bigcup_{P\in \cV_B}\Delta_{\epsilon}(P)=\emptyset\right\}
\end{equation*}

The following lemma plays an important role in the proof of Theorem \ref{main3}.
\begin{lemma}\label{impor-lemma}
Let $n\ge 0, B\in \sB_n'$. Then for any $P\in \QQ^d$ with $q(P)^{1+\lambda}\le 2H_{n+1}$, we have $\Delta_{\epsilon}(P)\cap B=\emptyset$.
\end{lemma}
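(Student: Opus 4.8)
The plan is to prove the statement by induction on $n$, using the definition of the family $\sB_n'$ to handle the ``new'' rationals at each level. Fix $n\ge 0$ and $B\in\sB_n'$, and fix $P\in\QQ^d$ with $q(P)^{1+\lambda}\le 2H_{n+1}$. We split into cases according to how small $q(P)$ is. If $H_n^{1/(1+\lambda)}\le q(P)$, then in particular $H_n\le q(P)^{1+\lambda}\le H_B(P)$ by Lemma \ref{L:q}, while $H_B(P)\le q(P)^{1+\lambda}\le 2H_{n+1}$ as well; hence $P\in\cV_B$, and the defining condition $B\cap\bigcup_{P\in\cV_B}\Delta_\epsilon(P)=\emptyset$ in the definition of $\sB_n'$ gives $\Delta_\epsilon(P)\cap B=\emptyset$ immediately.

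It remains to treat $P$ with $q(P)<H_n^{1/(1+\lambda)}$, i.e. $q(P)^{1+\lambda}<H_n$. Here I would argue by downward reference: such a $P$ satisfies $q(P)^{1+\lambda}<H_n=2d^2\epsilon\kappa\rho_0^{-1}R^{n+1}$, so $q(P)^{1+\lambda}\le 2H_{n'+1}$ for every $n'\le n-1$ as well (since $H_{n'+1}$ is increasing in $n'$ and $H_n \le 2H_{(n-1)+1}$). Since $B\in\sB_n'$, by definition there is $B'\in\sB_{n-1}'$ with $B\subset B'$; by the inductive hypothesis applied to $B'$ at level $n-1$ we get $\Delta_\epsilon(P)\cap B'=\emptyset$, and since $B\subset B'$ this yields $\Delta_\epsilon(P)\cap B=\emptyset$. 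The base case $n=0$ is vacuous in this second regime: when $n=0$ the bound $q(P)^{1+\lambda}\le 2H_1$ combined with $H_0\le\cdots$ — more precisely one checks $H_0 \le q(P)^{1+\lambda}$ is forced because $q(P)\ge 1$ always and $H_0=2d^2\epsilon\kappa\rho_0^{-1}R$ is small (here one uses the explicit choice of $\epsilon$ in \eqref{values}, which makes $H_0<1$), so every relevant $P$ falls into the first case and no induction is needed to start.

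The one point requiring care — and the main obstacle — is making sure the numerology is consistent: I need $H_n$ to be monotone increasing in $n$ (clear, since $R>1$), and I need the ``small $q$'' rationals at level $n$ to genuinely be covered by the inductive hypothesis at level $n-1$, which requires $q(P)^{1+\lambda}<H_n \Rightarrow q(P)^{1+\lambda}\le 2H_{n}= 2\cdot 2d^2\epsilon\kappa\rho_0^{-1}R^{n+1}$, and $H_n = 2H_{(n-1)+1}\cdot\frac{1}{2}\cdot\frac{H_n}{H_{n-1}}$... so in fact $q(P)^{1+\lambda}<H_n \le 2H_{(n-1)+1}$ directly since $H_n\le 2H_{(n-1)+1}=2H_n$. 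So the inductive step goes through with room to spare. I would also double-check that the threshold defining $\cV_{B}$ via the (non-canonical) height $H_B(P)$ interacts correctly with Lemma \ref{L:q}: the inequality $q(P)\le H_B(P)\le q(P)^{1+\lambda}$ is exactly what guarantees that $q(P)^{1+\lambda}\in[H_n,2H_{n+1}]$ — together with $H_n\le H_B(P)$, which needs $q(P)\ge H_n^{1/(1+\lambda)}$ — forces $P\in\cV_B$. That is the only place the precise form of $\cV_B$ enters, and it is the heart of the argument; everything else is bookkeeping on the nested families $\sB_n'$.
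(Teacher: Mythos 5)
Your inductive bookkeeping in the second case (descending from $B\in\sB_n'$ to its ancestor $B'\in\sB_{n-1}'$ when $q(P)^{1+\lambda}<H_n$) is sound, but the first case contains a genuine gap, and it is exactly where the real content of the lemma lies. You write ``$H_n\le q(P)^{1+\lambda}\le H_B(P)$ by Lemma \ref{L:q}'', but Lemma \ref{L:q} gives the opposite inequality: $q(P)\le H_B(P)\le q(P)^{1+\lambda}$. From $q(P)\ge H_n^{1/(1+\lambda)}$ you therefore only get $H_B(P)\ge H_n^{1/(1+\lambda)}$, which is far weaker than the lower bound $H_B(P)\ge H_n$ required for membership in $\cV_B$ (recall $\cV_B$ is defined through the ball-dependent height $H_B(P)$, not through $q(P)$, and $H_n\gg H_n^{1/(1+\lambda)}$ once $H_n>1$). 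So a rational $P$ with $q(P)^{1+\lambda}\in[H_n,2H_{n+1}]$ may well have $H_B(P)<H_n$, hence $P\notin\cV_B$, and the defining property of $\sB_n'$ gives you nothing; nor does your inductive step apply, since $q(P)^{1+\lambda}$ is too large for the hypothesis at level $n-1$. This untreated regime is precisely what the lemma is about.

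The paper closes this gap with an argument your proposal has no substitute for: assuming $\Delta_\epsilon(P)$ meets $B=B_n$ (hence meets every ancestor $B_k$, $1\le k\le n$), one knows $H_{B_k}(P)\notin[H_k,2H_{k+1}]$ for all $k$, and then one proves by downward induction from the level $n_0$ where $2H_{n_0}<q^{1+\lambda}\le 2H_{n_0+1}$ that $H_{B_k}(P)<H_k$ for all $k\le n_0$. The key step is the comparison $H_{B_k}(P)\le 2H_{B_{k+1}}(P)$, obtained by showing $\ba^+(B_{k+1},P)\in\sA_{B_k,P}$ — this uses that the $\bz$-coordinates of the centers of the nested balls differ by at most $\rho(B_k)$, together with the slack term $\rho(B)^{1/2}$ built into the definition of $\sA_{B,P}$ and the minimality \eqref{E:max}. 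One then lands at $H_{B_1}(P)<H_1<1$, contradicting $H_{B_1}(P)\ge q\ge 1$. In short: because the height is not canonical and varies with the ball, you cannot decide membership in $\cV_B$ from $q(P)$ alone; controlling how $H_{B_k}(P)$ can vary along the chain of balls is the heart of the proof, and your proposal omits it.
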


\begin{proof}
Note that $2H_1<1$, hence we may assume that $n\ge 1$. We denote $B_n=B$, and let $B_n\subset\cdots \subset B_0$ be such that $B_k\in \sB_k'$. Assume the contrary that the conclusion of the lemma is not true. Then there exists $P=(\frac{\bp}{q},\frac{r}{q})\in \QQ^d$ with $q^{1+\lambda}\le 2H_{n+1}$ such that $\Delta_{\epsilon}(P) \cap B_k \ne \emptyset $ for every $1\le k\le n$. It then follows from the definition of $\sB_k'$ that $P\notin \cV_{B_k}$, that is,
\begin{equation}\label{ine-1}
H_{B_k}(P)\notin [H_k,2H_{k+1}], \ \forall 1\le k\le n.
\end{equation}
Let $1\le n_0\le n$ be such that
\begin{equation}\label{ine-2}
2H_{n_0}< q^{1+\lambda}\le 2H_{n_0+1}.
\end{equation}
We claim that
\begin{equation}\label{ine-3}
H_{B_k}(P)<H_k, \ \forall 1\le k\le n_0.
\end{equation}
We prove the above claim inductively as follows. Since $H_{B_{n_0}}(P)\le q^{1+\lambda}\le 2H_{n_0+1}$, it follows from \eqref{ine-1} that \eqref{ine-3} holds for $k=n_0$. Suppose that $1\le k\le n_0-1$ and \eqref{ine-3} holds if $k$ is replaced by $k+1$. We prove that
\begin{equation}\label{ine-4}
H_{B_k}(P)\le 2 H_{B_{k+1}}(P).
\end{equation}
Denote $\ba^+(B_i,P)=(\ba_i,b_i), \bz_{B_i}=\bz_i (i=k,k+1)$.
We claim that
\begin{equation}\label{incl-k}
\ba^+(B_{k+1},P)\in \sA(B_k,P).
\end{equation}
Since $\ba^+(B_{k+1},P)\in \sA(B_{k+1},P)$, it is clear that
\begin{equation*}
(\ba_{k+1},b_{k+1})\ne (\bzero,0), \ \ba_{k+1}\cdot \bp+ b_{k+1}r + c_{k+1}q=0, \ \|\ba_{k+1}\|_{\infty}\le q^\lambda.
\end{equation*}
On the other hand, it follows from \eqref{ine-2} and the induction hypothesis that
\begin{align}
|b_{k+1}+\bz_k\cdot \ba_{k+1}| &\le |b_{k+1}+\bz_{k+1}\cdot \ba_{k+1}|+|(\bz_{k}-\bz_{k+1})\cdot \ba_{k+1}|\notag \\
                               &\le |b_{k+1}+\bz_{k+1}\cdot \ba_{k+1}|+d\|\ba_{k+1}\|_{\infty}\rho(B_k)\notag\\
                               &\le q^\mu+\rho(B_{k+1})^{\frac{1}{2}}+dq^{-1}H_{B_{k+1}}(P)\rho(B_k)\notag \\
                               &\le q^\mu+ (\beta R)^{-\frac{1}{2}}\rho(B_k)^{\frac{1}{2}}+dH_{n_0}^{-\frac{1}{2}}H_{k+1}\rho(B_k)\notag \\
                               &\le q^\mu+\frac{1}{2} \rho(B_k)^{\frac{1}{2}}+ d(2d^2\epsilon\kappa R)^{\frac{1}{2}}\rho(B_k)^{\frac{1}{2}}\notag\\
                               &\le q^\mu+\rho(B_k)^{\frac{1}{2}}.\notag
\end{align}
This proves our claim \eqref{incl-k}. It then follows from \eqref{incl-k} and \eqref{E:max} that
\begin{align*}
H_{B_k}(P) &=q \max\{\|\ba_k\|_{\infty},|b_k+\bz_k\cdot \ba_k|\} \\
           &\le q\max\{\|\ba_{k+1}\|_{\infty}, |b_{k+1}+\bz_{k}\cdot \ba_{k+1}|\} \\
           &\le q\max\{\|\ba_{k+1}\|_{\infty}, |b_{k+1}+\bz_{k+1}\cdot \ba_{k+1}|+d\|\ba_{k+1}\|_{\infty}\rho(B_k)\}\\
           &\le 2q\max\{\|\ba_{k+1}\|_{\infty}, |b_{k+1}+\bz_{k+1}\cdot \ba_{k+1}|\} \\
           &=2H_{B_{k+1}}(P).
\end{align*}
Thus \eqref{ine-4} holds. It follows from \eqref{ine-4} and the induction hypothesis that $H_{B_k}(P)\le 2H_{k+1}$. By \eqref{ine-1}, we have $H_{B_k}(P)<H_k$. Thus the claim \eqref{ine-3} follows. This means that $H_{B_1}(P)<H_1<1$, a contradiction. This completes the proof.
\end{proof}

\section{Proof of Theorem \ref{main3}}
At first, we prove the following proposition which plays a key role in the proof of Theorem \ref{main3}.
\begin{proposition}\label{main prop}
Fix $\beta\in(0,1)$ and $\gamma>0$, and a closed ball $B_0\in\sB$ as in Subsection \ref{dec-q d}. Let $R$ be a positive number satisfying \eqref{def-r} and $\epsilon$ given by \eqref{values}. For $n\ge0$, $B\in\sB_n'$ and $k\ge 1$, consider the set
\begin{equation*}
\sC_{B,k,\epsilon}=\Big\{(B',P)\in \sB\times \QQ^d: B'\in \sB_{n+k}, \ B'\subset B, \ P\in \cV_{B',k}, \text{\ and\ } \Delta_{\epsilon}(P) \cap B \neq \emptyset \Big\}.
\end{equation*}
Then there exists an affine hyperplane $E_k(B)\subset \RR^{2d-1}$ such that for any $(B',P)\in \sC_{B,k,\epsilon}$, we have
\begin{equation*}
\Delta_{\epsilon}(P)\cap B' \subset E_k(B)^{(R^{-(n+k)}\rho_0)}.
\end{equation*}
\end{proposition}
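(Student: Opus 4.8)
The plan is to fix $B \in \sB_n'$ and produce the hyperplane $E_k(B)$ using the line-attaching machinery of Section~3.4. Here is the intuition: if $(B',P) \in \sC_{B,k,\epsilon}$, then $\Delta_\epsilon(P) \cap B \neq \emptyset$, so by Lemma~\ref{impor-lemma} the denominator $q(P)$ cannot be too small — in fact, since $B \in \sB_n'$ forces $\Delta_\epsilon(P) \cap B = \emptyset$ whenever $q(P)^{1+\lambda} \le 2H_{n+1}$, combined with $P \in \cV_{B',k}$ with $B' \in \sB_{n+k}$ we get that $q(P)$ lies in a controlled window determined by $H_{n+k}$ and the index $k$. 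The key point will be that the set $\Delta_\epsilon(P)$, being defined by the two inequalities $|y - r/q| < \epsilon q^{-1-\mu}$ and $\|\bx - \bp/q - (y - r/q)\bz\|_\infty < \epsilon q^{-1-\lambda}$, is a thin neighborhood of an affine subspace of $\RR^{2d-1}$, and for $(\bx,y,\bz)$ in this set the relevant ``direction'' in $\RR^d$-coordinates $(\bx, y) \approx (\bp/q, r/q)$ is pinned down by $P$. I would use the attached line $\cL_{B',P}$ (equivalently the vector $\bv^+(B',P) \in \Lambda_P$ with the smallness estimates \eqref{def-parallel}) to detect a \emph{single} linear relation satisfied, up to the error $R^{-(n+k)}\rho_0$, by every point of every $\Delta_\epsilon(P) \cap B'$ as $P$ ranges over $\sC_{B,k,\epsilon}$.

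The steps, in order. First I would show that all the $P$ appearing in $\sC_{B,k,\epsilon}$ share a common value of $\bv^+$ in a strong sense: since $B', B'' \subset B$ and both are in $\sB_{n+k}$, their centers $\bz_{B'}, \bz_{B''}$ differ by $O(R^{-n}\rho_0)$, and the estimates \eqref{def-parallel} together with the denominator window for $\cV_{B',k}$ (using \eqref{ine qxi} to bound $\xi/q^\lambda$) force the vectors $\bv^+(B',P)$ and $\bv^+(B'',P')$ to be so short and so nearly parallel to a fixed direction (namely $(\bz_B, 1)$ suitably scaled, or rather its orthogonal complement) that they must span the same rational line through the origin — here one uses that two short lattice-type vectors confined to a thin cone are proportional. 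Second, having extracted this common line $\ell \subset \RR^d$, I define $E_k(B) \subset \RR^{2d-1}$ to be the affine hyperplane cut out by the linear functional that vanishes on $\ell$, evaluated on the $(\bx, y)$-coordinates, with the constant chosen using (say) the center of $B$ — concretely $E_k(B) = \{(\bx,y,\bz) : \bw^* \cdot (\bx,y) = c\}$ where $\bw^*$ is a primitive integer normal to $\ell$ and $c$ is forced. Third, I verify the containment: given $(B',P) \in \sC_{B,k,\epsilon}$ and $(\bx,y,\bz) \in \Delta_\epsilon(P) \cap B'$, I estimate $|\bw^* \cdot (\bx,y) - c|$ by writing $(\bx, y) = (\bp/q, r/q) + (\text{error of size } \epsilon q^{-1-\lambda} \text{ in } \bx, \ \epsilon q^{-1-\mu} \text{ in } y)$, using that $\bw^*$ is (nearly) orthogonal to the $P$-direction so the contribution of $(\bp/q, r/q)$ is controlled, and using that the $\bz$-dependence in the definition of $\Delta_\epsilon(P)$ can be absorbed because $\bz$ is pinned to $O(R^{-(n+k)}\rho_0)$ inside $B'$. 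The factors $\|\bw^*\| \lesssim q^\lambda$ and the denominator lower bound $q \gtrsim H_{n+k}^{1/(1+\lambda)}$ should combine with the choice \eqref{values} of $\epsilon$ to yield total error $\le R^{-(n+k)}\rho_0$.

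The main obstacle I anticipate is Step~one: proving that a single rational line $\ell$ works for \emph{all} pairs in $\sC_{B,k,\epsilon}$ simultaneously, rather than getting a different line per pair. This is where the subdivision into $\cV_{B',k}$ (with its narrow denominator ranges of multiplicative width $R^{2d}$) and the inequality \eqref{ine qxi} are essential: they guarantee that $\xi(B',P)/q^\lambda$ is exponentially small in $k$, so the vector $\bv^+(B',P)$ — whose $\bx$-type coordinates are $O(q^{-\lambda})$ and whose last coordinate is $O(\xi q^{-\lambda-\mu})$ — lands in an extremely thin slab around the fixed hyperplane $\{w_d = 0\}$ direction relative to $\bz_B$; any two such lattice vectors (in lattices $\Lambda_P, \Lambda_{P'}$ whose relevant sublattices are comparable) that are this close to a common $1$-dimensional direction and this short must be rational multiples of one another. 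Making ``comparable lattices / short vectors in a thin cone $\Rightarrow$ proportional'' precise — quantitatively controlling how the cone angle and vector length force collinearity, and checking the thresholds in \eqref{def-r} and \eqref{values} are generous enough — is the technical heart. The secondary nuisance is bookkeeping the $\bz$-shear term $(y - r/q)\bz$ in $\Delta_\epsilon(P)$, but since $|y - r/q| < \epsilon q^{-1-\mu}$ and $\|\bz\|_\infty \le \kappa$ on $B_0$, this term is of size $\epsilon \kappa q^{-1-\mu}$, harmless after multiplying by $\|\bw^*\| \le q^\lambda$ given the margin in \eqref{values}.
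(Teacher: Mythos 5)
Your plan builds $E_k(B)$ out of a common direction for the attached lines, but it has two genuine gaps, and the second is fatal even if the first were repaired. First, your Step one --- that the vectors $\bv^+(B',P)$, $(B',P)\in\sC_{B,k,\epsilon}$, all span one rational line --- is unsupported: the standard ``two short vectors in a thin cone must be proportional'' argument requires both vectors to lie in a \emph{single} lattice (so that a non-proportional pair generates a rank-two sublattice whose covolume bounds the product of the lengths from below), whereas here $\bv^+(B',P)\in\Lambda_P$ and $\bv^+(B'',P')\in\Lambda_{P'}$ live in different lattices; both merely lie in $\tfrac{1}{qq'}\ZZ^d$, whose covolume is far too small to force collinearity, and nothing in \eqref{def-parallel} or \eqref{ine qxi} substitutes for this. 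Second, and more seriously, even a common direction $\ell$ would not give the conclusion. The containment $\Delta_{\epsilon}(P)\cap B'\subset E_k(B)^{(R^{-(n+k)}\rho_0)}$ forces the numbers $\bw^*\cdot(\tfrac{\bp}{q},\tfrac{r}{q})$ to be essentially constant over all $P$ occurring in $\sC_{B,k,\epsilon}$, i.e.\ the rational points themselves must lie within $\sim R^{-(n+k)}\rho_0$ of one affine hyperplane. Knowing that through each $P$ there passes a line of direction $\ell$ controls nothing transverse to $\ell$ (orthogonality of $\bw^*$ to $\ell$ only kills displacements along $\ell$): a priori the points are spread over $B$, of radius $\asymp R^{-n}\rho_0$, which exceeds the target width by the factor $R^{k}$ you cannot absorb. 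In addition, for $d\ge 3$ a line has no preferred normal, and you give no size control on $\bw^*$, which enters the width estimate.

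The missing ingredient is an exact arithmetic rigidity statement, and this is how the paper proceeds: choose $(B_0',P_0)\in\sC_{B,k,\epsilon}$ with \emph{minimal} denominator $q_0$ and take $E_k(B)=\cH_{B_0',P_0}$, the hyperplane attached via the integer vector $\ba^+$ (not the line $\bv^+$). Lemma \ref{point-hyperplane} shows that $F_{B_0',P_0}(P)=0$ \emph{exactly} for every pair in $\sC_{B,k,\epsilon}$, by combining the quantitative bound of Lemma \ref{main estimate} with integrality (the relevant multiple of $F_{B_0',P_0}(P)$ lies in $\ZZ$, both in the case $k=1$ and in the case where the attached line is parallel to the hyperplane), and, in the remaining case where the attached line meets the hyperplane, by producing a rational point of small enough denominator that its $\Delta_{\epsilon}$-neighborhood would meet $B$, contradicting $B\in\sB_n'$ via Lemma \ref{impor-lemma}. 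Thus the attached lines serve only as an auxiliary device inside that case analysis; they cannot replace the integer normals $\ba^+$, the exact vanishing, and the minimality of $q_0$, which together pin all the points $P$ to one hyperplane and give the width bound $\le R^{-(n+k)}\rho_0$. Your final-step bookkeeping (the shear term, \eqref{ine qxi}, the choice of $\epsilon$) is in the right spirit but does not touch this core difficulty.
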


We shall need the following the two lemmas.
\begin{lemma}\label{main estimate}
 Let $(B_1,P_1),(B_2,P_2)\in \sC_{B,k,\epsilon}$, and $F_{B_2,P_2}$ be the function defined in \eqref{def-f-bp}, then one has
\begin{equation*}
\left|F_{B_2,P_2}(P_1)\right|\le 30d^4\kappa^2\epsilon q_1^{-1}R^{e_k+k+1}
\end{equation*}
with
\begin{equation*}
e_k=\begin{cases}
10d^2, & k=1 ;\\
2d, &k> 1.
\end{cases}
\end{equation*}

\end{lemma}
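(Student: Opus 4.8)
The plan is to compute $F_{B_2,P_2}(P_1)$ directly from \eqref{def-f-bp}, namely $F_{B_2,P_2}(P_1)=\ba^+(B_2,P_2)\cdot(P_1-P_2)$, and to extract smallness from the hypothesis that both $\Delta_\epsilon(P_1)$ and $\Delta_\epsilon(P_2)$ meet $B$. Write $\ba^+(B_2,P_2)=(\ba_2,b_2)$ and $P_i=(\bp_i/q_i,r_i/q_i)$. Since $(B_i,P_i)\in\sC_{B,k,\epsilon}$ I would choose $\bw^{(i)}=(\bx^{(i)},y^{(i)},\bz^{(i)})\in\Delta_\epsilon(P_i)\cap B$; put $\eta_i:=y^{(i)}-r_i/q_i$, so $|\eta_i|<\epsilon q_i^{-1-\mu}$ and the vector $\boldsymbol\delta_i:=\bx^{(i)}-\bp_i/q_i-\eta_i\bz^{(i)}$ has $\|\boldsymbol\delta_i\|_\infty<\epsilon q_i^{-1-\lambda}$. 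Substituting $\bp_i/q_i=\bx^{(i)}-\eta_i\bz^{(i)}-\boldsymbol\delta_i$ and $r_i/q_i=y^{(i)}-\eta_i$ into $F_{B_2,P_2}(P_1)$ and collecting terms yields the exact identity
\begin{align*}
F_{B_2,P_2}(P_1)=&\ \big[\ba_2\cdot(\bx^{(1)}-\bx^{(2)})+b_2(y^{(1)}-y^{(2)})\big]-\big[\eta_1(\ba_2\cdot\bz^{(1)}+b_2)-\eta_2(\ba_2\cdot\bz^{(2)}+b_2)\big]\\
&-\ba_2\cdot(\boldsymbol\delta_1-\boldsymbol\delta_2),
\end{align*}
a sum of three error terms: the first is controlled by $\rho(B)$, since $\bw^{(1)},\bw^{(2)}\in B$ differ by at most $2\rho(B)$ in each coordinate; the second by $|\eta_i|<\epsilon q_i^{-1-\mu}$; the third by $\|\boldsymbol\delta_i\|_\infty<\epsilon q_i^{-1-\lambda}$.

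Next I would feed in bounds on $\ba^+(B_2,P_2)$ and on the relevant scales. From $\xi(B_2,P_2)=\max\{\|\ba_2\|_\infty,|\ba_2\cdot\bz_{B_2}+b_2|\}$, from $\|\bz_{B_2}\|_\infty<\kappa$, and from the fact that $\bz_B$, $\bz_{B_2}$ and each $\bz^{(i)}$ differ pairwise by at most $2\rho(B)<1$, one gets $\|\ba_2\|_\infty\le\xi(B_2,P_2)$, $|\ba_2\cdot\bz^{(i)}+b_2|\le 3\xi(B_2,P_2)$ and $|b_2|\le d\kappa\,\xi(B_2,P_2)$. Since $B_2\in\sB_{n+k}$ and $P_2\in\cV_{B_2}$, one has $\xi(B_2,P_2)=H_{B_2}(P_2)/q_2\le 2H_{n+k+1}/q_2$; combined with $\rho(B)\le R^{-n}\rho_0$ and the explicit $H_{n+k+1}=2d^2\epsilon\kappa\rho_0^{-1}R^{n+k+2}$ this makes the first error term $\lesssim d^3\kappa^2\epsilon R^{k+2}/q_2$, and the other two are estimated in the same way. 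Finally, since $P_1\in\cV_{B_1,k}$, $P_2\in\cV_{B_2,k}$ and $B_1,B_2\in\sB_{n+k}$, the denominators $q_1,q_2$ lie in one and the same interval defining $\cV_{\,\cdot\,,k}$, whose endpoints depend only on $H_{n+k},R,d,k$; hence $q_1/q_2,\,q_2/q_1\le R^{e_k}$ with $e_k$ as in the statement. Replacing $q_2$ by $q_1$, using $q_i\ge 1$ to pass from $q_i^{-1-\mu},q_i^{-1-\lambda}$ to $q_i^{-1}$, and using the lower bound $q_1\ge H_{n+k}^{1/(1+\lambda)}$ — and, when $k\ge 2$, the extra factor $R^{10d^2+(2k-4)d}$ in the lower bound defining $\cV_{B_1,k}$ together with the sharp estimate \eqref{ine qxi}, i.e.\ $\xi(B_2,P_2)\le 2R^{-8d^2-2kd+1}q_2^\lambda$, which is exactly what pins the $R$-exponent at $e_k=2d$ rather than $10d^2$, whereas for $k=1$ the crude $\|\ba_2\|_\infty\le q_2^\lambda$ already suffices — one arrives at the bound $30d^4\kappa^2\epsilon q_1^{-1}R^{e_k+k+1}$ in the statement.

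The main technical point, and the main obstacle, is this last bookkeeping: one has to control $n,k,\lambda,\mu,R,\epsilon,\rho_0,\kappa$ simultaneously and verify, separately for $k=1$ and $k\ge 2$, that the $R^{n}$-type growth carried by $\xi(B_2,P_2)$ is cancelled by the $R^{-n}$ decay of $\rho(B)$ and of $q_i^{-1}$, while the choices \eqref{def-r} of $R$ and \eqref{values} of $\epsilon$ (in particular the $R^{-20d^2}$ factor) absorb all remaining numerical and $(d,\kappa)$-dependent constants. Everything else is a lengthy but routine chain of triangle inequalities applied to the identity above. (As a sanity check: $30d^4\kappa^2\epsilon R^{e_k+k+1}<\rho_0<1\le q_1$, so in fact $|F_{B_2,P_2}(P_1)|<1/q_1$; since $C(B_2,P_2)\in\ZZ$ forces $F_{B_2,P_2}(P_1)\in q_1^{-1}\ZZ$, the lemma will ultimately be used to conclude $F_{B_2,P_2}(P_1)=0$.)
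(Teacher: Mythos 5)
Your overall architecture is the same as the paper's (pick points of $\Delta_\epsilon(P_i)\cap B$, expand $F_{B_2,P_2}(P_1)=\ba^+(B_2,P_2)\cdot(P_1-P_2)$, apply the triangle inequality, get the factor $R^{e_k}$ from the fact that $q_1,q_2$ lie in one and the same interval defining $\cV_{\cdot,k}$ over balls in $\sB_{n+k}$, and play $H_{B_2}(P_2)\le 2H_{n+k+1}$ against $\rho(B)\le R^{-n}\rho_0$), but one of your three coefficient bounds is too lossy, and the estimate genuinely fails there. In your second error term you multiply $\eta_i$ (of size $<\epsilon q_i^{-1-\mu}$) by $|\ba_2\cdot\bz^{(i)}+b_2|$, which you bound by $3\xi(B_2,P_2)$. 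But $\xi(B_2,P_2)=H_{B_2}(P_2)/q_2$ is in general only $\le q_2^{\lambda}$, and it really can be of that order: if $q_2$ sits near the lower endpoint of its interval, then $\xi(B_2,P_2)\ge H_{n+k}/q_2\approx q_2^{\lambda}$. Since the weight satisfies $\lambda\ge\mu$, with strict inequality in the main case of the paper, the contribution $\xi(B_2,P_2)\,\epsilon q_1^{-1-\mu}$ is of order $\epsilon q_1^{\lambda-\mu-1}$ up to bounded powers of $R$, i.e.\ it exceeds $\epsilon q_1^{-1}$ by a factor at least of order $H_{n+k}^{(\lambda-\mu)/(1+\lambda)}$, which tends to infinity with $n$. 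No choice of the constants in \eqref{def-r} and \eqref{values} can absorb a factor growing with $n$, so you never reach the uniform bound $30d^4\kappa^2\epsilon q_1^{-1}R^{e_k+k+1}$; in particular the phrase ``the other two are estimated in the same way'' is where the argument breaks.

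The missing idea is precisely what the asymmetric constraints in the definition of $\sA_{B,P}$ are for: the coefficient in front of the $y$-direction error must be $|b_2+\bz_{B_2}\cdot\ba_2|$, which by construction is $\le q_2^{\mu}+\rho(B_2)^{1/2}\le 2q_2^{\mu}$, and then $2q_2^{\mu}\cdot\epsilon q_i^{-1-\mu}\le 2\epsilon q_1^{-1}R^{e_k}$ because $q_1,q_2$ differ by at most $R^{e_k}$. Concretely, regroup your identity around the center $\bz_{B_2}$, writing $F_{B_2,P_2}(P_1)=\ba_2\cdot\bigl(\frac{\bp_1}{q_1}-\frac{\bp_2}{q_2}-(\frac{r_1}{q_1}-\frac{r_2}{q_2})\bz_{B_2}\bigr)+(b_2+\bz_{B_2}\cdot\ba_2)\bigl(\frac{r_1}{q_1}-\frac{r_2}{q_2}\bigr)$, as the paper does. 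The discrepancy terms $\eta_i\,\ba_2\cdot(\bz^{(i)}-\bz_{B_2})$, which your grouping lumps into the $\eta_i$-coefficient, then migrate into the $\rho(B)$-bucket, where they are harmless: they are $\lesssim d\,q_2^{\lambda}\rho(B)\,\epsilon q_i^{-1}$ and $q_2^{\lambda}\rho(B)\lesssim (2H_{n+k+1})^{\lambda}R^{-n}\rho_0\lesssim \rho_0R^{k+2}$ because $\lambda<1$. With this regrouping your first and third terms are fine as you treat them (there $\|\ba_2\|_{\infty}\le q_2^{\lambda}$ correctly pairs with $\epsilon q_i^{-1-\lambda}$), and your bookkeeping $q_1/q_2,\,q_2/q_1\le R^{e_k}$ is correct. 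One further small correction: \eqref{ine qxi} plays no role in this lemma; for $k\ge2$ the exponent $e_k=2d$ comes solely from the narrower $q$-interval defining $\cV_{B',k}$, and \eqref{ine qxi} is what the paper uses later, in Lemma \ref{point-hyperplane} (where, for $k\ge2$, the vanishing of $F_{B_2,P_2}(P_1)$ is \emph{not} deduced by the integrality argument you sketch at the end, which only works for $k=1$).
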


\begin{proof}
 Write $P_j=(\frac{\bp_j}{q_j},\frac{r_j}{q_j})$ and let $(\bx_j,y_j,\bz_j)\in \Delta_{\epsilon}(P_j)\cap B, j=1,2.$ Then
\begin{equation*}
\left\|\bx_j-\frac{\bp_j}{q_j}-\left(y_j-\frac{r_j}{q_j}\right)\bz_j\right\|_{\infty}<\frac{\epsilon}{q_j^{1+\lambda}}, \quad \left|y_j-\frac{r_j}{q_j}\right|<\frac{\epsilon}{q_j^{1+\mu}}.
\end{equation*}
The latter inequality implies that
\begin{equation*}
\left|\frac{r_j}{q_j}\right|\le |y_j|+\frac{\epsilon}{q_j^{1+\mu}}\le \kappa.
\end{equation*}
One has
\begin{align*}
&\quad \ \left\|\frac{\bp_1}{q_1} -\frac{\bp_2}{q_2}-\left(\frac{r_1}{q_1}-\frac{r_2}{q_2}\right)\bz_{B_2}\right\|_{\infty} \\
 &=\Big\|-\left(\bx_1-\frac{\bp_1}{q_1}-\left(y_1-\frac{r_1}{q_1}\right)\bz_1\right)+ \left(\bx_2-\frac{\bp_2}{q_2}-\left(y_2-\frac{r_2}{q_2}\right)\bz_2\right) +(\bx_1-\bx_2) \\
 &\ \ \  + \frac{r_1}{q_1}(\bz_1-\bz_{B_2})+ \frac{r_2}{q_2}(\bz_{B_2}-\bz_2)+ (y_1\bz_1-y_2\bz_2)\Big\| \\
 &\le \left\|\bx_1-\frac{\bp_1}{q_1}-\left(y_1-\frac{r_1}{q_1}\right)\bz_1\right\|_{\infty}
 + \left\|\bx_2-\frac{\bp_2}{q_2}-\left(y_2-\frac{r_2}{q_2}\right)\bz_2\right\|_{\infty} \\
 &\ \ \ + \left\|\bx_1-\bx_2\right\|_{\infty}+ \left\|\frac{r_1}{q_1}(\bz_1-\bz_{B_2})\right\|_{\infty}+ \left\|\frac{r_2}{q_2}(\bz_{B_2}-\bz_2)\right\|_{\infty}+ \left\|y_1\bz_1-y_2\bz_2\right\|_{\infty}\\
 &\le \frac{\epsilon}{q_1^{1+\lambda}}+\frac{\epsilon}{q_2^{1+\lambda}}+10\kappa\rho(B)
\end{align*}
and
\begin{equation*}
\left|\frac{r_1}{q_1}-\frac{r_2}{q_2}\right|=\left|-\left(y_1-\frac{r_1}{q_1}\right)+\left(y_2-\frac{r_2}{q_2}\right)+(y_1-y_2)\right|\le \frac{\epsilon}{q_1^{1+\mu}}+\frac{\epsilon}{q_2^{1+\mu}}+2\rho(B).
\end{equation*}
Then it follows that
\begin{align*}
\left|F_{B_2,P_2}(P_1)\right| &=\left|\ba_2\cdot\left(\frac{\bp_1}{q_1}-\frac{\bp_2}{q_2}\right)+ b_2\left(\frac{r_1}{q_1}-\frac{r_2}{q_2}\right)\right| \\
                          &=\left|\ba_2\cdot\left(\frac{\bp_1}{q_1}-\frac{\bp_2}{q_2}-\left(\frac{r_1}{q_1}-\frac{r_2}{q_2}\right)\bz_{B_2}\right)+
                           (b_2+\bz_{B_2}\cdot\ba_2)\left(\frac{r_1}{q_1}-\frac{r_2}{q_2}\right)\right|\\
                          &\le d\|\ba_2\|_\infty\left(\frac{\epsilon}{q_1^{1+\lambda}}+\frac{\epsilon}{q_2^{1+\lambda}}+10\kappa\rho(B)\right)+ |b_2+\bz_{B_2}\cdot\ba_2|\left(\frac{\epsilon}{q_1^{1+\mu}}+\frac{\epsilon}{q_2^{1+\mu}}+2\rho(B)\right)\\
                          &\le dq_2^\lambda\left(\frac{\epsilon}{q_1^{1+\lambda}}+\frac{\epsilon}{q_2^{1+\lambda}}\right)
                          +2q_2^\mu\left(\frac{\epsilon}{q_1^{1+\mu}}+\frac{\epsilon}{q_2^{1+\mu}}\right)+12d\kappa\rho(B)\max\{ \|\ba_2\|_\infty,|b_2+\bz_{B_2}\cdot\ba_2|\} \\
                          &\le d\epsilon q_1^{-1}\left(\frac{q_2^{\lambda}}{q_1^{\lambda}}+\frac{q_1}{q_2}+2\frac{q_2^{\mu}}{q_1^{\mu}}+2\frac{q_1}{q_2}\right)
                          +12d\kappa R^{-n}\rho_0q_2^{-1}H_{B_2}(P_2)\\
                          &\le 6d\epsilon q_1^{-1}R^{e_k}+48d^3\epsilon \kappa^{2}q_2^{-1}R^{k+1} \\
                          &\le 30d^4\kappa^2\epsilon q_1^{-1}R^{e_k+k+1}.
\end{align*}
\end{proof}

\begin{lemma}\label{point-hyperplane}
For any $(B_1,P_1),(B_2,P_2)\in \sC_{B,k,\epsilon}$, we have $F_{B_2,P_2}(P_1)=0$.
\end{lemma}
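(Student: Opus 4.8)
The plan is to show that the integer $F_{B_2,P_2}(P_1)$, which by the remark at the end of Section~\ref{subsc-hyperplane} has integer coefficients and integer constant term, must actually vanish because the bound from Lemma~\ref{main estimate} forces it into the open interval $(-1,1)$. Concretely, $F_{B_2,P_2}(P_1) = \ba_2 \cdot \bp_1/q_1 + b_2 r_1/q_1 - C(B_2,P_2) \in \tfrac{1}{q_1}\ZZ$, since $\ba_2,b_2 \in \ZZ$ and $C(B_2,P_2)\in\ZZ$. So it suffices to prove $|F_{B_2,P_2}(P_1)| < 1/q_1$, i.e. $|q_1 F_{B_2,P_2}(P_1)| < 1$; and since this quantity is an integer, that gives $F_{B_2,P_2}(P_1)=0$.

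First I would invoke Lemma~\ref{main estimate}, which gives
$$\left|F_{B_2,P_2}(P_1)\right|\le 30d^4\kappa^2\epsilon\, q_1^{-1}R^{e_k+k+1},$$
so that $|q_1 F_{B_2,P_2}(P_1)| \le 30d^4\kappa^2\epsilon R^{e_k+k+1}$. Next I would bound the exponent: since $e_k \le 10d^2$ and, because $(B',P)\in\sC_{B,k,\epsilon}$ forces $B'\in\sB_{n+k}$ with $B'\subset B$ and $B\subset B_0$, we must have $\rho(B') \le R^{-(n+k)}\rho_0$ and hence $R^{-(n+k)}\rho_0 \ge \beta R^{-(n+k)}\rho_0 > 0$; more to the point, the radii being positive and bounded forces $n+k$ to range only over indices with $R^{-(n+k)}\rho_0 \le \rho_0$, but what I actually need is an absolute bound on $k$ in terms of the data. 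Here one uses that $P\in\cV_{B',k}$ together with Lemma~\ref{L:q} and the definition of $\cV_{B'}$ constrains $q(P)$, hence constrains $k$; in fact the relevant point is simply that $e_k + k + 1 \le 10d^2 + k + 1$ and that the factor $R^{-20d^2}$ hidden in $\epsilon$ (see \eqref{values}) dominates: $\epsilon = 10^{-2}d^{-6}\kappa^{-2}R^{-20d^2}\rho_0$, so
$$30d^4\kappa^2\epsilon R^{e_k+k+1} = 30d^4\kappa^2 \cdot 10^{-2}d^{-6}\kappa^{-2}R^{-20d^2}\rho_0 \cdot R^{e_k+k+1} \le \tfrac{1}{3}d^{-2}\rho_0 R^{e_k+k+1-20d^2}.$$
Provided $e_k + k + 1 - 20d^2 \le 0$ and $\rho_0 < 1/d < 1$ (which holds since $B_0\in\sB$), the right-hand side is $< 1/q_1$ once $q_1 \ge 1$ — and indeed $q_1 = q(P_1) \ge 1$ always. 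So the only thing to check carefully is the exponent inequality $e_k + k + 1 \le 20d^2$, equivalently $k \le 20d^2 - 10d^2 - 1 = 10d^2 - 1$ when $k=1$ handled separately, and $k \le 20d^2 - 2d - 1$ for $k>1$.

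The main obstacle, therefore, is verifying that $k$ cannot be too large, i.e. that $\sC_{B,k,\epsilon} = \emptyset$ for $k$ exceeding the threshold above. This is where Lemma~\ref{impor-lemma} and the definition of $\sB_{n+k}'$ versus $\sB_{n+k}$ enter: if $(B',P) \in \sC_{B,k,\epsilon}$ then $\Delta_\epsilon(P)\cap B \ne \emptyset$ and $P\in\cV_{B',k}\subset\cV_{B'}$, which via \eqref{E:qq} and the definition of $\cV_{B'}$ pins $q(P)$ between roughly $H_{n+k}^{1/(1+\lambda)}$ and $2H_{n+k+1}$; combined with the $\cV_{B',k}$ constraint $q(P) \le H_{n+k}^{1/(1+\lambda)}R^{10d^2+(2k-2)d}$ and the lower bound $q(P) \ge H_{n+k}^{1/(1+\lambda)}R^{10d^2+(2k-4)d}$, and the fact that $q(P)^{1+\lambda} \le 2H_{n+k+1} = 2d^2\epsilon\kappa\rho_0^{-1}R^{n+k+2}$ cannot be too small relative to $H_n$ (else Lemma~\ref{impor-lemma} applied at level $B\in\sB_n'$ would give $\Delta_\epsilon(P)\cap B = \emptyset$, contradicting membership in $\sC_{B,k,\epsilon}$), one extracts a two-sided bound on $R^{2kd}$ that caps $k$ at $O(d)$. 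Once that cap is in hand, the exponent inequality is immediate from the generous choice of the $R^{-20d^2}$ factor in \eqref{values}, and the integrality argument closes the proof.
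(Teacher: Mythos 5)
Your proof is fine for $k=1$ (it is exactly the paper's Case 1), but the reduction you rely on for $k\ge 2$ --- that $\sC_{B,k,\epsilon}=\emptyset$ once $k$ exceeds an absolute threshold of order $d$, or even $20d^2$ --- is false, and this is a genuine gap. The constraints defining $\cV_{B',k}$ for $B'\in\sB_{n+k}$ only force, roughly, $R^{10d^2+(2k-4)d}\le 2H_{n+k+1}H_{n+k}^{-1/(1+\lambda)}$, and since $H_m\approx R^{m}$ (up to the fixed constant $2d^2\epsilon\kappa\rho_0^{-1}$) this reads $2kd\lesssim \lambda(n+k)/(1+\lambda)+O(d^2)$: it caps $k$ only in terms of $n$, and $n$ is unbounded (indeed, in the proof of Theorem \ref{main3} Alice must handle all $1\le k\le n$). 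Lemma \ref{impor-lemma} only supplies the lower bound $q(P)^{1+\lambda}>2H_{n+1}$, which does not bound $k$ from above either. So for $n$ large there can be pairs $(B',P)\in\sC_{B,k,\epsilon}$ with $k\gg 20d^2$, and then your key inequality $30d^4\kappa^2\epsilon R^{e_k+k+1}<1$ fails: the factor $R^{-20d^2}$ hidden in $\epsilon$ cannot absorb the $R^{k+1}$ growth coming from Lemma \ref{main estimate}, and the integrality of $q_1F_{B_2,P_2}(P_1)$ alone no longer yields vanishing.

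This is precisely why the paper's proof splits into three cases rather than running the integrality argument uniformly. For $k\ge 2$ it brings in the attached vector $\bv^+(B_1,P_1)$ and the inequality \eqref{ine qxi}, whose decay $\xi_1 q_1^{-\lambda}\le 2R^{-8d^2-2kd+1}$ in $k$ is exactly what beats the $R^{k+1}$ growth. If the line $\cL_1$ is parallel to the hyperplane $\cH_2$, one shows $q_1F_2(P_1)v_{i,1}$ and $q_1F_2(P_1)u_1$ are integers and bounds $q_1|F_2(P_1)|\bigl(\sum_i|v_{i,1}|+|u_1|\bigr)<1$ using \eqref{ine qxi}, giving a contradiction unless $F_2(P_1)=0$; if $\cL_1$ meets $\cH_2$, one takes the intersection point $P_0$, bounds its denominator by $q_0\le q_1|\ba_2^+\cdot\bv_1^+|$, shows $q_0^{1+\lambda}\le H_n$ and $\Delta_{\epsilon}(P_1)\cap B\subset\Delta_{\epsilon}(P_0)$, and contradicts Lemma \ref{impor-lemma} together with $B\in\sB_n'$. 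Your proposal contains none of these ideas, and without them the case of large $k$ is not covered, so the argument as written does not prove the lemma.
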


\begin{proof}
For simplicity, we write the objects $\ba^+(B_j,P_j)$, $\bv^+(B_j,P_j)$, $\xi_{B_j,P_j}$, $F_{B_j,P_j}$, $\cL_{B_j,P_j}$, $\cH_{B_j,P_j}$ $(j=1,2)$ as $\ba^+_j$, $\bv^+_j$, $\xi_j$, $F_j$, $\cL_j$, $\cH_j$ respectively. We are divided into three cases:
\begin{enumerate}
\item \textbf{Case $k=1$}.
Then by Lemma \ref{main estimate}, we have
\begin{equation*}
q_1|F_{2}(P_1)|\le 30d^4\kappa^2\epsilon R^{e_k+k+1}<1.
\end{equation*}
As $q_1|F_{2}(P_1)|\in \ZZ$, we have $F_{2}(P_1)=0$.

\item \textbf{Case $k\ge 2$ and $\cL_{1}$ parallel to $\cH_{2}$, that is
\begin{equation}\label{parallel-av}
\ba_{2}^+\cdot \bv_{1}^+=0.
\end{equation}}
Assume the contrary that $F_{B_2,P_2}(P_1)\ne0$. Write $\bv_{1}^+=(\bv_1,u_1)=(v_{1,1},\ldots,v_{d-1,1},u_1)$. We claim that
\begin{equation}\label{claim1}
q_1\big|F_{2}(P_1)v_{i,1}\big|,\  q_1\big|F_{2}(P_1)u_{1}\big|\in \ZZ \  \text{for each } 1\le i \le d-1.
\end{equation}
Indeed, since $\bv_{1}^+\in \Lambda_{P_1}\setminus \{\bzero\}$, we can write
\begin{equation}\label{value-v-p1}
\bv_{1}^+=c \left(\frac{\bp_1}{q_1},\frac{r_1}{q_1}\right)+\bc,
\end{equation} where  $c\in\ZZ$, $\bc\in\ZZ^d$.
Combining \eqref{parallel-av} and \eqref{value-v-p1}, we get
\begin{equation}\label{pr-claim1}
c F_{2}(P_1)\in \ZZ.
\end{equation}
According to \eqref{value-v-p1}, $q_1v_{i,1}, q_1u_1 \in c\ZZ+q_1\ZZ$. Then the claim
\eqref{claim1} follows directly from \eqref{pr-claim1}.

Note that $\bv^+_{1}\ne \bzero$. It follows from \eqref{claim1} that
\begin{equation*}
q_1\Big|F_{2}(P_1)\Big|\Big(\sum_{1\le i\le d-1}|v_{i,1}|+|u_1|\Big)\ge 1.
\end{equation*}
However, according to Lemma \ref{main estimate} and \eqref{ine qxi}, we have
\begin{eqnarray*}
&&\quad q_1\Big|F_{2}(P_1)\Big|\Big(\sum_{1\le i\le d-1}|v_{i,1}|+|u_1|\Big)\\
&&\le 30d^4\kappa^2\epsilon R^{e_k+k+1} \Big(2d(d-1)q_1^{-\lambda}+2d\xi_1 q_1^{-\lambda-\mu}\Big)\\
&&\le 60d^6 \kappa^2\epsilon R^{e_k+k+1} \xi_1q_1^{-\lambda}\\
&&\le 120d^6 \kappa^2\epsilon R^{2d+k+1} R^{-8d^2-2kd+1} \\
&&<1
\end{eqnarray*}
which leads to a contradiction.

\item \textbf{Case $k\ge 2$ and $\cL_{1}$ intersects $\cH_{2}$}.\\
Assume the contrary that $F_{B_2,P_2}(P_1)\ne0$. Let
$$P_0=\frac{\bp_0^+}{q_0}=(\frac{\bp_0}{q_0},\frac{r_0}{q_0})$$
be their intersection. Write
\begin{equation*}
\frac{\bp_0^+}{q_0}=\frac{\bp_1^+}{q_1}+t_0\bv_{1}^+.
\end{equation*}
Then $\left(\frac{\bp_0^+}{q_0},t_0\right)^T$ is the solution of the following linear equations
\begin{equation*}
 \begin{pmatrix}q_1I_d & -q_1\bv_1^{+T} \\ \ba_2^+ & 0 \end{pmatrix} \begin{pmatrix}\bw \\ t
  \end{pmatrix}=
 \begin{pmatrix}\bp_1^{+T} \\ C_2 \end{pmatrix}
\end{equation*}
where $\bv_1^{+T}, \bp_1^{+T}$ means the transport of $\bv_1^+, \bp_1^+$, and $C_2=C(B_2,P_2)$ is defined in \eqref{def-c-bp}. Let $M$ be the following matrix
\begin{equation*}
 \begin{pmatrix}q_1I_d & -q_1\bv_1^{+T}  &\bp_1^{+T}\\ \ba_2^+ & 0 & C_2\end{pmatrix}
\end{equation*}
and $M_i (1\le i\le d+2)$ be the matrix obtained by deleting the $i$-th column of $M$. In view of the fact that $\bv_1^+\in \Lambda_{P_1}$, a simple computation immediately implies
\begin{equation}\label{compu-det}
\det(M_i)\in q_1^{d-1}\ZZ \quad(1\le i\le d+2).
\end{equation}
By Cramer's rule,
\begin{equation}\label{cramer}
\left(\frac{\bp_0^+}{q_0},t_0\right)=\left(\frac{\det(M_1)}{\det(M_{d+2})},\cdots, \frac{\det(M_{d+1})}{\det(M_{d+2})}\right).
\end{equation}
Hence
\begin{equation}\label{lambda0}
|t_0|=\left|\frac{\det(M_{d+1})}{\det(M_{d+2})}\right|=\frac{|F_2(P_1)|}{|\ba_{2}^+\cdot \bv_{1}^+|}.
\end{equation}
In view of \eqref{compu-det} and \eqref{cramer}, we have
\begin{equation}\label{ine-q0}
q_0\le q_1^{-d+1}|\det(M_{d+2})|=q_1|\ba_{2}^+\cdot \bv_{1}^+|.
\end{equation}
It is clear that
\begin{align}\label{ine-av}
|\ba_{2}^+\cdot \bv_{1}^+|
&\le \sum_{1\le i\le d-1}\left|\xi_2\cdot 2dq_1^{-\lambda}\right|+\left|q_2^{\mu}\cdot2d\xi_1 q_1^{-\lambda-\mu}\right| \notag\\
&\le 2d(d-1)R^{e_k}\xi_1q_1^{-\lambda}+2dR^{e_k}\xi_1 q_1^{-\lambda} \notag\\
&\le 2d^2R^{e_k}\xi_1 q_1^{-\lambda}\notag\\
&\le 2d^2R^{-8d^2-(2k-2)d+1}.
\end{align}
It follows that
\begin{align}\label{ine-av1}
\frac{q_0}{q_1}&\le |\ba_{2}^+\cdot \bv_{1}^+| \notag\\
&\le 2d^2R^{-8d^2-(2k-2)d+1}\notag\\
&\le \frac{1}{2}.
\end{align}
Combine the inequalities \eqref{ine-q0}, \eqref{ine-av}, \eqref{ine qxi} and the obvious estimate $\lambda \ge 1/d$, we have
\begin{align}\label{ine-q0-2}
q_0^{1+\lambda} &\le q_1^{1+\lambda}|\ba_{2}^+\cdot \bv_{1}^+|^{1+\lambda}  \notag\\
          &\le q_1^{1+\lambda}(2d^2R^{e_k}\xi_1 q_1^{-\lambda})^{1+\lambda} \notag\\
          &\le 4d^4R^{4d}(\xi_1^\lambda q_1^{-\lambda^2})H_{B_1}(P_1) \notag\\
          & < 4d^4R^{4d}R^{-8d-2k+1}2R^k H_n \notag\\
          &\le 8d^4R^{-4d-k+1}H_n\notag\\
          &\le H_n.
\end{align}
Note that
\begin{equation*}
\left\|\frac{\bp_1}{q_1}-\frac{\bp_0}{q_0}-\left(\frac{r_1}{q_1}-\frac{r_0}{q_0}\right)\bz_{B_1}\right\|_{\infty}=|t_0|\left\|\bv_1-\bz_{B_1}u_1\right\|_{\infty}\le \frac{2d|t_0|}{q_1^{\lambda}}
\end{equation*}
and
\begin{equation*}
\left|\frac{r_1}{q_1}-\frac{r_0}{q_0}\right|= |t_0 u_1|\le \frac{2d|t_0|\xi_1}{q_1^{\lambda+\mu}}.
\end{equation*}

We claim that
\begin{equation}\label{contain}
\Delta_{\epsilon}(P_1)\cap B\subset \Delta_{\epsilon}(P_0).
\end{equation}
In view of Lemma \ref{impor-lemma} and \eqref{ine-q0-2},  \eqref{contain} will contradict to the assumption that $B\in \sB_n'$. It remains to prove \eqref{contain}. Indeed, for $(\bx,y,\bz)\in\Delta_{\epsilon}(P_1)\cap B$, by \eqref{ine-av1}, \eqref{lambda0} and \eqref{ine-q0} we have
 \begin{align*}
 q_0^{1+\mu}\left|y-\frac{r_0}{q_0}\right|
&\le q_0^{1+\mu}\left|y-\frac{r_1}{q_1}\right|+q_0^{1+\mu}\left|\frac{r_1}{q_1}-\frac{r_0}{q_0}\right| \\
&\le q_0^{1+\mu}\frac{\epsilon}{q_1^{1+\mu}}+q_0^{1+\mu}\frac{2d|t_0|\xi_1}{q_1^{\lambda+\mu}} \\
&\le \frac{\epsilon}{2}+2dq_1|F_2(P_1)|\frac{\xi_1}{q_1^\lambda} \\
&\le \frac{\epsilon}{2}+60d^5\kappa^2R^{2d+k+2-8d^2-2kd}\epsilon \\
&\le \epsilon
\end{align*}
and
\begin{align*}
&\ q_0^{1+\lambda}\left\|\bx-\frac{\bp_0}{q_0}-\left(y-\frac{r_0}{q_0}\right)\bz\right\|_{\infty}\\
&\le q_0^{1+\lambda}\left\|\bx-\frac{\bp_1}{q_1}-\left(y-\frac{r_1}{q_1}\right)\bz\right\|_{\infty}+ q_0^{1+\lambda}\left\|\frac{\bp_1}{q_1}-\frac{\bp_0}{q_0}-\left(\frac{r_1}{q_1}-\frac{r_0}{q_0}\right)\bz\right\|_{\infty}\\
&\le q_0^{1+\lambda}\frac{\epsilon}{q_1^{1+\lambda}}+q_0^{1+\lambda}\frac{2d|t_0|}{q_1^{\lambda}}+ q_0^{1+\lambda}d\left|\frac{r_1}{q_1}-\frac{r_0}{q_0}\right|\|\bz-\bz_{B_1}\|_{\infty} \\
&\le \frac{\epsilon}{2}+2d|q_1F_2(P_1)|\cdot\frac{q_0^\lambda}{q_1^\lambda}+ 2d^2|q_1F_2(P_1)|\cdot\frac{q_0^\lambda}{q_1^\lambda}\cdot2R^{-n}\rho_0\frac{q_1\xi_1}{q_1^{1+\mu}}\\
&\le \frac{\epsilon}{2}+120d^5\kappa^2\epsilon R^{2d+k+1}\cdot2d^2R^{-8d-(2k-2)+1}R^k\\
&\le \epsilon.
\end{align*}
\end{enumerate}
\end{proof}

\begin{proof}[Proof of Proposition \ref{main prop}]
Choose $(B_0',P_0)\in\sC_{B,k,\epsilon}$ such that
$$q_0=q(P_0)=\min\left\{q(P): \exists\text{ closed ball $B'$ with }  (B',P)\in\sC_{B,k,\epsilon} \right\}.$$
Consider the attached hyperplane in $\RR^{2d-1}$
 $$\cH_{B_0',P_0}=\left\{(\bx,y,\bz)\in\RR^{2d-1}: \ba_0\cdot\bx +b_0y-C=0\right\}$$
where $\ba_0^+=(\ba_0,b_0)$ and $C=C(B_0',P_0)$ are given in Subsection \ref{subsc-hyperplane}. We claim that $\cH_{B_0',P_0}$ is the $E_k(B)$ that we need. In other words, for any $(B',P)\in \sC_{B,k,\epsilon}$,
$$\Delta_{\epsilon}(P)\cap B'\subset \cH_{B_0',P_0}^{(R^{-(n+k)}\rho_0)}.$$
Indeed, we have proved in Lemma \ref{point-hyperplane} that $P\in \cH_{B_0',P_0}$ for $(B',P)\in \sC_{B,k,\epsilon}$. Hence for any $(\bx,y,\bz)\in\Delta_{\epsilon}(P)\cap B'$, we have
\begin{align*}
|\ba_0\cdot\bx+b_0y-C| &=\left|\ba_0\cdot\left(\bx-\frac{\bp}{q}\right)+b_0\left(y-\frac{r}{q}\right)\right| \\
                     &\le (d-1)\|\ba_0\|_{\infty}\left\|\bx-\frac{\bp}{q}-\left(y-\frac{r}{q}\right)\bz\right\|_{\infty}+|b_0+\bz\cdot\ba_0|\left|y-\frac{r}{q}\right| \\
&\le (d-1)q_0^{\lambda}\frac{\epsilon}{q^{1+\lambda}}+2q_0^{\mu}\frac{\epsilon}{q^{1+\mu}} \\
&\le (d+1)\frac{\epsilon}{q_0}.
\end{align*}
Denote the width of this thicken hyperplane as $\omega$,  then
\begin{align*}
\omega &\le \frac{(d+1)\epsilon}{q_0 \max\{\|\ba_0\|_{\infty},|b_0|\}} \\
       &\le \frac{(d+1)\epsilon}{(1+(d-1)\kappa)^{-1}q_0\max\{\|\ba_0\|_{\infty},|b_0+\bz\cdot\ba_0|\}} \\
       &\le \frac{(d+1)(1+(d-1)\kappa)\epsilon}{H_{n+k}}\\
       &\le R^{-(n+k)}\rho_0
\end{align*}
which finishes the proof.
\end{proof}
\begin{proof}[Proof of Theorem \ref{main3}]
In view of Remark \ref{diffeo}, Lemma \ref{HPW} and Lemma \ref{Dani-Klein}, to prove Theorem \ref{main3}, it suffices to show that the set $S(\rr)$ is $(\beta,\gamma)$-HPW for any $\beta\in(0,1)$, $\gamma>0$. Fix $\beta\in(0,1)$ and $\gamma>0$ from now on. Bob starts the $(\beta,\gamma)$-hyperplane potential game on $\RR^{2d-1}$ with target set $S(\rr)$ by choosing a closed ball $B_0\subset\RR^{2d-1}$ of radius $\rho_0$. As discussed in \cite[Remark 2.4]{AGK}, without loss of generality we may assume that Bob will play so that $\rho_0\le1/d$ and $\rho_i:=\rho(B_i)\to0$, where $B_i$ is the ball chosen by Bob at the $i$-th turn. Now we have fixed $\beta\in(0,1)$ and $\gamma>0$, and the closed ball $B_0\in\sB$ as in Proposition \ref{main prop}. Let $R$ be a positive number satisfying \eqref{def-r} and $\epsilon$ be the constant given by \eqref{values}.
Write $i_n$ to be the smallest nonnegative integer with $B_{i_n}\in\sB_n$. Let $\cN$ denote the set of all $n\in\NN$ with $B_{i_n}\in\sB_n'$.

Let Alice play according to the strategy as follows.  At the $i$-th stage, if $i=i_n$ for some $n\in \cN$, then  Alice chooses the family of hyperplane neighborhoods $\{E_k(B_{i_n})^{(3R^{-(n+k)}\rho_0)}:k\in\NN\}$, where the hyperplane $E_k(B_{i_n})$ is given by Proposition \ref{main prop}. Otherwise, Alice makes an empty move.
Since $B_{i_n}\in\sB_n$, it follows that $\rho_{i_n}>\beta R^{-n}\rho_0$. Hence  Alice's move is legal as we have
$$\sum_{k=1}^\infty(3R^{-(n+k)}\rho_0)^\gamma=(3R^{-n}\rho_0)^\gamma(R^\gamma-1)^{-1}\le(\beta \rho_{i_n})^\gamma.$$
We claim that this is a winning strategy for Alice, that is, the point $\bx_\infty=\bigcap_{i=0}^\infty B_i$ lies in the set
$$S(\rr)\cup\bigcup_{n\in\cN}\bigcup_{k=1}^\infty E_k(B_{i_n})^{(3R^{-(n+k)}\rho_0)}.$$

To prove our claim, we are divided into two different cases:
\begin{enumerate}
\item \textbf{Case $\cN=\NN\cup \{0\}$.} For any $P\in \QQ^d$, there is $n$ such that $q^{1+\lambda}\le 2H_{n+1}$. Since $n\in \cN$, we have $B_{i_n}\in \sB_n'$. Then we have $\Delta_{\epsilon}(P)\cap B_{i_n}=\emptyset$ by Lemma \ref{impor-lemma}. Thus it follows from the definition of $S(\rr)$ that $\bx_{\infty}\in S_{\epsilon}(\rr)\subset S(\rr)$. Hence Alice wins.
\item \textbf{Case $\cN\ne \NN\cup \{0\}$.}  Let $n$ be the smallest integer with $n\notin \cN$.  Then we have $B_{i_n}\notin \sB_n'$ and $B_{i_{n-1}}\in \sB_{n-1}'$ as $n-1\in \cN$. By the definition of $\sB_n'$, there exists $P\in \cV_{B_{i_n},k}$ with $1\le k \le n$ and $\Delta_{\epsilon}(P)\cap B_{i_n}\ne \emptyset$. By Proposition \ref{main prop}, we have $\Delta_{\epsilon}(P)\cap B_{i_n}\subset E_k(B_{i_{n-k}})^{(R^{-n}\rho_0)}$. In view of $\rho_{i_n}\le R^{-n}\rho_0$, it follows that $\bx_{\infty}\in B_{i_n} \subset E_k(B_{i_{n-k}})^{(3R^{-n}\rho_0)}$. Hence Alice wins.
\end{enumerate}
This completes the proof of Theorem \ref{main3}.
\end{proof}

\section{Proof of main theorems}
In this section, we deduce Theorem \ref{main1} and Theorem \ref{main2} from Theorem \ref{main3}. Indeed, the argument presented here is similar to the argument presented in \cite[Section 6]{AGK}. For sake of completeness, we reproduce the proof in our setting here.

\begin{proof}[Proof of Theorem \ref{main1}]
The proof is divided into three steps:

\textbf{Step 1.} We show that it suffices to prove the set $E(F^+)$ is HAW on $\ggm$. Indeed, by applying the following diffeomorphism
\begin{equation*}
\tau: \ggm \to \ggm, \tau(g\Gamma)=(g^T)^{-1}\Gamma
\end{equation*}
to the set $E(F^+)$, we can see that the set $E(F^-)$ is also HAW if $E(F^+)$ does, where $F^-$ denotes the subsemigroup $\{e\}\cup (F\setminus F^+)$. Hence, in view of the intersection stability of HAW sets,  $E(F)$ will be HAW if $E(F^+)$ does.

\textbf{Step 2}. We show that it suffices to prove the theorem for $F^+=F_{\rr}^+$, which was defined in Theorem \ref{main3}. Indeed, by the real Jordan decomposition (cf. \cite[Proposition 4.3.3]{Mor}), for any one-parameter diagonalizable  subsemigroup $F^+$, there are one-parameter subsemigroups $F_i^+=\{g_t^{(i)}:t>0\} (i=1,2)$ such that $F_1^+$ is $\RR$-diagonalizable, $F_2^+$ has compact closure, and $g_t=g_t^{(1)}g_t^{(2)}$ with $g_t^{(1)}$ commuting with $g_t^{(2)}$. It is obvious that $E(F^+)=E(F^+_1)$. Hence we are reduced to consider the case that $F$ satisfying \eqref{e:tame} and $\RR$-diagonalizable, which is equivalent to say that there exists $g'\in G$ and $\rr$ satisfying \eqref{def-r-special} such that $F^+=g'F^+_{\rr}g'^{-1}$. Note that in this case we have $E(F^+)=g'E(F_{\br}^+)$. Hence our statement follows from (3) of Lemma \ref{haw-manifold}.

\textbf{Step 3}. We prove the theorem for $F^+_{\rr}$. In view of Lemma \ref{haw-manifold}, we have to prove that for any $\Lambda\in \ggm$, there is an open neighborhood $\Omega$ of $\Lambda$ in $\ggm$ such that $\Omega\cap E(F_{\br}^+)$ is HAW on $\Omega$. Let
\begin{equation*}
P= \left\{ g\in G: g=\begin{pmatrix} T&\bzero \\ N&T' \end{pmatrix},
T\in \GL_{d-1}(\RR), N\in M_{2\times (d-1)}(\RR), T' \text{\ is lower triangular} \right\}.
\end{equation*}
It's not hard to check that for any $g\in P$, the set $\{g_tgg_t^{-1} :t>0\}$ is bounded in $G$. By the Bruhat decomposition, the set $PU$ is Zariski open in $G$ and the multiplication map $P\times U \to PU$ is a diffeomorphism.

 According to the Borel density theorem, the set $\pi^{-1}(\Lambda)$ is Zariski dense in $G$. Hence, $\pi^{-1}(\Lambda)\cap PU\ne \emptyset$, that is, there exists $p_0\in P$ and $u_0\in U$ such that $\Lambda= p_0u_0\Gamma$.

Let $\Omega_P$ and $\Omega_U$ be open neighborhoods of $p_0$ and $u_0$ in $P$ and $U$ respectively, which are small enough such that the map $\phi: \Omega_P\times \Omega_U\to \ggm, \phi(p,u)=pu\Gamma$ is a diffeomorphism onto an open subset $\Omega$ in $\ggm$. In view of Lemma \ref{haw-manifold}(4), it suffices to prove that the set
\begin{equation}\label{def-set}
\phi^{-1}(E(F_{\br}^+)\cap \Omega)= \{(p,u)\in \Omega_P\times \Omega_U:pu\Gamma\in E(F_{\br}^+) \}
\end{equation}
is HAW on $\Omega_P\times \Omega_U$. By the definition of $P$, we have that $pu\Gamma\in E(F_{\br}^+)$ if and only if $u\Gamma\in E(F_{\br}^+)$. It follows that the set
\eqref{def-set} is equal to
\begin{equation*}
\Omega_P \times \{u\in \Omega_U: u\Gamma\in E(F_{\br}^+)\}.
\end{equation*}
Then it from Theorem \ref{main3} and (5) of Lemma \ref{haw-manifold} that the set $E(F_{\rr}^+)$ is HAW.
\end{proof}

\begin{proof}[Proof of Theorem \ref{main2}]
We will prove the theorem only for $F^+=F^+_{\rr}$ with $\rr$ satisfying \eqref{def-r-special} here, since the proof for general $F^+$ satisfying \eqref{e:tame} follows along the same lines as Step 2 of the proof of Theorem \ref{main1} and will be omitted. There are two subcases.
\begin{enumerate}
\item \textbf{Case $r_1>r_d$.} Then it is easy to check that $H(F^+)$ is equal to $U$. We need to prove that for any $\Lambda\in \ggm$, the set $u\in U$ such that $u\Lambda\in E(F^+)$ is HAW on $U$. In view of Lemma \ref{haw-manifold}, it suffices to prove that for any $u_0\in U$, there is an open neighborhood $\Omega$ of $u_0$ in $U$ such that the set
\begin{equation}\label{def-omega}
\{u\in \Omega: u\Lambda \in E(F^+)\}
\end{equation}
is HAW on $\Omega$. Similar to the proof of Theorem \ref{main1}, the Bruhat decompostion and the Borel density theorem imply that $\pi^{-1}(\Lambda)\cap u_0^{-1}PU\ne \emptyset$. Choose $g_0\in \pi^{-1}(\Lambda)\cap u_0^{-1}PU$. Then $\Lambda=g_0\Gamma$ and $u_0g_0\in PU$. Let $\Omega_1$ be an open neighborhood of $u_0$ in $U$ with $\Omega_1 g_0\subset PU$. Then there are smooth maps $\phi:\Omega_1\rightarrow P$ and $\psi:\Omega_1\rightarrow U$ such that
\begin{equation}\label{phi-psi}
ug_0=\phi(u)\psi(u) \quad \forall u\in \Omega_1
\end{equation}

We claim that
\begin{equation}\label{claim-iso}
\text{the tangent map $(d\psi)_{u_0}$ is a linear isomorphism.}
\end{equation}
The set \eqref{def-omega} is HAW follows from our claim. Indeed, assuming \eqref{claim-iso}, we can find a neighborhood $\Omega\subset \Omega_1$ such that $\psi$ is a diffeomorphism when restricted on $\Omega$. Note that for $u\in \Omega$, the set
$$F^+_{\rr}u\Lambda=F^+_{\rr}u g_0\Gamma=F^+_{\rr}\phi(u)\psi(u)\Lambda$$
is bounded if and only if $=F^+_{\rr}\psi(u)\Lambda$ is bounded. Hence, in view of Theorem \ref{main3} and Lemma \ref{haw-manifold}, we prove that the set \eqref{def-omega} is HAW modulo Claim \eqref{claim-iso}.

Let's turn to the proof of Claim  \eqref{claim-iso}. Write $p_1=\phi(u), u_1=\psi(u)$, then it follows from \eqref{phi-psi} that
\begin{equation}\label{tagent}
dr_{g_0}(Y)=dr_{u_1}\circ (d\phi)_{u_0}(Y)+dl_{p_1}\circ (d\psi)_{u_0}(Y),\quad \forall Y\in T_{u_0}U.
\end{equation}
If $(d\psi)_{u_0}(Y)=0$, then the left-hand side of \eqref{tagent} belongs to $T_{u_0g_0}(Uu_0g_0)$ and right-hand side belongs to $T_{u_0g_0}(Pu_0g_0)$, thus $Y=0$. This proves Claim \eqref{claim-iso}.

\item \textbf{Case $r_1=r_d=\frac{1}{d}$.} In this case, the expanding horospherical subgroup $H$ coincide with the subgroup $U_0$ defined as
\begin{equation*}
U_0:=\{u_{\bx}: \bx\in \RR^d\}, \quad \text{where } u_{\bx}=\begin{pmatrix} Id&\bx \\0&1 \end{pmatrix} \in G.
\end{equation*}
In view of the correspondence presented in \cite[Theorem 2.20]{D1}, the set $\{x\in \RR^d: u_\bx \Gamma \in E(F^+)\}$ coincides with the set of badly approximable vectors $\Bad_d$, which is proved to be HAW already in \cite{BFKRW}. Then we omit the remaining part of the proof here, since it is similar to the proof of the above case $r_1>r_d$.
\end{enumerate}

\end{proof}

\section*{acknowledgements}
The authors are grateful to Jinpeng An for helpful suggestions and comments. The research of the second author is supported by CPSF (\#2015T80010).

\end{document}